  \theoremstyle{definition}
  \newtheorem*{example*}{\protect\examplename}
\theoremstyle{plain}
\newtheorem{thm}{\protect\theoremname}
  \theoremstyle{definition}
  \newtheorem{defn}[thm]{\protect\definitionname}
  \theoremstyle{plain}
  \newtheorem{lem}[thm]{\protect\lemmaname}
  \theoremstyle{plain}
  \newtheorem{prop}[thm]{\protect\propositionname}
  \theoremstyle{remark}
  \newtheorem*{rem*}{\protect\remarkname}
  \theoremstyle{plain}
  \newtheorem*{assumption*}{\protect\assumptionname}
  \theoremstyle{plain}
  \newtheorem{cor}[thm]{\protect\corollaryname}
  \theoremstyle{plain}
  \newtheorem*{prop*}{\protect\propositionname}
  \providecommand{\assumptionname}{Assumption}
  \providecommand{\corollaryname}{Corollary}
  \providecommand{\definitionname}{Definition}
  \providecommand{\examplename}{Example}
  \providecommand{\lemmaname}{Lemma}
  \providecommand{\propositionname}{Proposition}
  \providecommand{\remarkname}{Remark}
\providecommand{\theoremname}{Theorem}
\begin{document}

\title{Local Asymptotic Normality of Infinite-Dimensional Concave Extended
Linear Models}

\author{K\={o}saku Takanashi\\
Faculty of Economics, Keio University}
\maketitle
\begin{abstract}
We study local asymptotic normality of M-estimates of convex minimization
in an infinite dimensional parameter space. The objective function
of M-estimates is not necessary differentiable and is possibly subject
to convex constraints. In the above circumstance, narrow convergence
with respect to uniform convergence fails to hold, because of the
strength of it's topology. A new approach we propose to the lack-of-uniform-convergence
is based on Mosco-convergence that is weaker topology than uniform
convergence. By applying narrow convergence with respect to Mosco
topology, we develop an infinite-dimensional version of the convexity
argument and provide a proof of a local asymptotic normality. Our
new technique also provides a proof of an asymptotic distribution
of the likelihood ratio test statistic defined on real separable Hilbert
spaces. 
\end{abstract}

\section{Introduction}

We develop an infinite-dimensional version of local asymptotic normality
and convexity arguments with non-differentiable objective functions
in M-estimation of concave extended linear models. A new approach
we propose is based on Mosco convergence that is weaker than uniform
convergence in the topological sense. Because of the strength of uniform
convergence, it does not fold in infinite dimensional circumstances.
In this paper, we give proofs of local asymptotic normality on a real
separable Hilbert space.

The basic set-up of the estimation problem we investigate is as follows.
Let $\mathscr{H}$ be a real separable Hilbert space with the identical
dual $\mathscr{H}^{*}=\mathscr{H}$. We denote the inner product and
the associated norm in $\mathscr{H}$ by $\left\langle \cdot,\cdot\right\rangle $
and $\left\Vert \cdot\right\Vert $ respectively. Let $\theta$ be
a parameter vector in a parameter set $\Theta$ such that $\Theta\subseteq\mathscr{H}$.
Suppose we have $n$ observations $Z_{1},\dots,Z_{n}$ that are realizations
of a random vector $Z$ on a arbitrary set $E$, and consider an M-estimator
of the unknown parameter vector $\theta$ such that

\begin{alignat}{1}
\hat{\theta}_{n} & =\arg\min_{\theta\in\Theta}\frac{1}{n}\sum_{i=1}^{n}\rho\left(\theta,Z_{i}\right),\label{eq:m-estimator}
\end{alignat}
where $\rho:\Theta\times E\rightarrow(-\infty,\infty]$ is a criterion
function. Define the empirical objective function in (\ref{eq:m-estimator})
as
\begin{equation}
F_{n}\left(\theta\right)\triangleq\frac{1}{n}\sum_{i=1}^{n}\rho\left(\theta,Z_{i}\right),\label{eq:empirical_object}
\end{equation}
and its population counterpart as
\begin{equation}
F_{0}\left(\theta\right)\triangleq\mathbb{E}_{Z}\left[\rho\left(\theta,Z\right)\right].\label{eq:population_object}
\end{equation}
We further suppose the minimization problem in (\ref{eq:m-estimator})
is corresponding to a ``concave extended linear model'', that is,
\begin{enumerate}
\item $\rho$ is a lower semi-continuous (l.s.c.) convex function with respect
to $\theta$ (it is not necessarily smooth, though),
\item $F_{0}\left(\theta\right)$ is strictly convex in $\theta$ (see,
e.g. \cite{Huang_01}) and is uniquely minimized at a (pseudo-) true
parameter $\theta_{0}\in\varTheta$. 
\end{enumerate}
Compared to the rate of convergence of the the M-estimator for the
concave extended linear model, only a few studies have explored its
asymptotic distribution and most of them is on the least squares regression
case (e.g. \cite{Newey_97}, \cite{Huang_03}, \cite{BCCK_15}). Recently,
\cite{Shang-Cheng_13} obtain a general result on point-wise asymptotic
normality of the M-estimator (\ref{eq:m-estimator}) based on functional
Bahadur representation. In proving the asymptotic normality, however,
they impose the smoothness condition on $\rho$ so that it should
be three times continuously differentiable with respect to $\theta$.
On the other hand, our new approach does not require the smoothness
of $\rho$. The following is our motivating example.
\begin{example*}[$L_{1}$ regression.]
 Consider a nonparametric regression model with additive errors:
\begin{align}
y & =\left\langle x,\theta\right\rangle +\varepsilon,\label{eq:regression}
\end{align}
where the regressor $x\in\mathscr{H}$ and the error term $\varepsilon$
are mutually independent random variables; $\varepsilon$ is assumed
to be homoskedastic; and the conditional median of $\epsilon$ given
$x$ is zero, i.e., $\inf\left\{ q:\ P_{\varepsilon}\left(q\mid x\right)\geq\frac{1}{2}\right\} =0$
where $P_{\varepsilon}\left(\cdot\mid x\right)$ is the distribution
function of $\varepsilon$ conditional on $x$. We are interested
in estimating $\theta\in\mathscr{H}$. Suppose we have observations
$Z_{i}=\left(y_{i},x_{i}\right):y_{i}\in\mathbb{{R}},x_{i}\in\mathscr{H},i=1,\ldots,n$
independently drawn from the regression model (\ref{eq:regression}).
With them, we may estimate $\theta$ via $L_{1}$ minimization with
roughness penalty(see, for example \cite{Koenker-Ng-Portnoy_94}):
\begin{align}
\hat{\theta}_{n} & =\arg\min_{\theta\in\Theta}\frac{1}{n}\sum_{i=1}^{n}\left|y_{i}-\left\langle x_{i},\theta\right\rangle \right|+\frac{\lambda}{2}\left\Vert \theta\right\Vert ,\label{eq:lad}
\end{align}
where $\lambda$ is the smoothing parameter that converges to zero
as $n\rightarrow\infty$. Obviously, this example gives the case in
which the criterion function $\rho=\left|\cdot\right|$ is not continuously
differentiable.
\end{example*}

Uniform convergence of the objective function in (\ref{eq:m-estimator})
to its population counterpart:
\begin{alignat*}{1}
\sup_{\theta\in\Theta}\left|\frac{1}{n}\sum_{i=1}^{n}\rho\left(\theta,Z_{i}\right)-\mathbb{E}_{Z}\left[\rho\left(\theta,Z\right)\right]\right| & \overset{p}{\rightarrow}0,
\end{alignat*}
guarantees both consistency of of $\hat{\theta}_{n}$ and convergence
of the optimal value of the objective function. In order to make the
objective function satisfy the uniform convergence, we have to impose
some compactness of the parameter space. These assumptions are rather
restrictive for fully nonparametric settings. It is because of the
theorem by Bakhvalov (Theorem 12.1.1. of \cite{Dudley_99}) . When
$\rho=\left|\cdot\right|$ and $\theta$ is in an infinite-dimensional
space, we have
\begin{alignat*}{1}
\sup_{\theta}\left|\frac{1}{n}\sum_{i=1}^{n}\rho\left(\theta,Z_{i}\right)-\mathbb{E}_{Z}\left[\rho\left(\theta,Z\right)\right]\right| & \geq\gamma n^{-1/\infty}.
\end{alignat*}
The left-hand side of the inequality does not converge uniformly.

Since $\rho$ is convex, it seems that we may use the convexity lemma
(e.g., \cite{Pollard_91} and \cite{Kato_09}) to ensure that point-wise
convergence of convex functions implies uniform convergence. In the
infinite-dimensional case, however, this argument for uniform convergence
may fail. Let $\pi_{n},\ n=1,2,\cdots$ be the sequence of projection
operators on $\mathscr{H}$ onto $E_{n}\subset\mathscr{H}$ where
$E_{n}\subsetneqq E_{m>n}$. Consider a quadratic form $\left\langle \pi_{n}\theta,\theta\right\rangle $
for $\forall\theta\in\mathscr{H}$ that is considered as a convex
function of $\theta$. Then, as $n\rightarrow\infty$, $\left\langle \pi_{n}\theta,\theta\right\rangle $
converges point-wise to $\left\langle \theta,\theta\right\rangle $
but not uniformly.

To solve the aforementioned lack-of-uniform-convergence issue, we
shall propose to apply an alternative mode of convergence, \textit{Mosco
convergence}, which is weaker than uniform convergence but still strong
enough to enable statistical applications. Mosco convergence of the
objective function ensures the convergence of its minimizer (\cite{Attouch_84}).
We develop narrow convergence theory with respect to the Mosco metric,
see also \cite{Geyer_94}, \cite{Dupacava-Wets_88}, \cite{Molchanov_05},
\cite{Knight_03} and \cite{Bucher-Segers-Volgushev_14}. There exist
alternative forms of convergence that is equivalent to Mosco convergence
but more easily verifiable. They include graph convergence (G-convergence)
of subdifferential operators and strong convergence of resolvent.
We shall explain these key concepts in Section 2. Using these equivalences,
we can establish the consistency and narrow convergence of an M-estimator
in an infinite-dimensional parameter space. Furthermore, Mosco convergence
also ensures the invertibility of the ``Hessian'' operator. 

If the parameter space is weakly compact, Mosco convergence of the
convex objective function $F_{n}(\theta)$ in M-estimation ensures
that both empirical minimizer $\hat{\theta}_{n}$ and empirical optimal
value function $F_{n}(\hat{\theta}_{n})$ will converges to the true
parameter $\theta_{0}$ and the true optimal value function $F_{0}(\theta_{0})$
respectively. This property makes it possible to derive the asymptotic
distribution of the optimal value function $F_{n}(\hat{\theta}_{n})$.
Namely,
\begin{enumerate}
\item The convex objective function $F_{n}\left(\theta\right)$ is locally
asymptotically normal at $\theta_{0}$ in Mosco topology:
\begin{alignat*}{1}
n\left[F_{n}\left(\theta_{0}+\frac{1}{\sqrt{n}}t\right)-F_{n}\left(\theta_{0}\right)\right] & \rightsquigarrow\left\langle t,W\right\rangle +\frac{1}{2}\left\langle Vt,t\right\rangle ,
\end{alignat*}
where $W$ is a normal random vector in a Hilbert space and $V=\nabla_{\theta}^{2}F_{0}$
is the ``Hessian'' operator that is almost surely invertible. 
\item The asymptotic distribution of the optimal value function $F_{n}(\hat{\theta}_{n})$
is 
\[
n\left[F_{n}(\hat{\theta}_{n})-F_{n}\left(\theta_{0}\right)\right]\rightsquigarrow\left\langle \hat{t},W\right\rangle +\frac{1}{2}\left\langle V\hat{t},\hat{t}\right\rangle ,
\]
where $\hat{t}=\sqrt{n}(\hat{\theta}_{n}-\theta_{0})$.
\end{enumerate}
As a by-product, the asymptotic distribution of the likelihood ratio
statistic can be derived. These results are established in a fully
nonparametric setting.

The rest of this paper is organized as follows. In Section 2, we describe
the Mosco convergence and introduce the narrow convergence in the
Mosco topology. In Section 3, we derive local asymptotic normality
of an convex objective function in an infinite-dimensional Hilbert
space. We also provide the asymptotic distribution of the likelihood
ratio statistic by using the local asymptotic normality. Appendixes
give some technical lemmas.

\subsection*{Notations}

Let $\rightsquigarrow$ denote narrow convergence and $\xrightarrow{P}$
denote convergence in probability. We use empirical process notation:
$\mathbb{G}_{n}\rho=\frac{1}{\sqrt{n}}\sum_{i=1}^{n}\rho\left(\theta,Z_{i}\right)-\mathbb{E}\left[\rho\left(\theta,Z_{i}\right)\right]$.
We denote $\left\Vert \theta\right\Vert $ as $l_{2}$-norm or $L_{2}$-norm
of an element of Hilbert space $\theta\in\mathscr{H}$. Let $\theta_{n}\xrightarrow{s}\theta_{0}$
denote convergence in strong topology, e,g,$\left\Vert \theta_{n}-\theta_{0}\right\Vert \rightarrow0$
and $\theta_{n}\xrightarrow{w}\theta_{0}$ denote convergence in weak
topology, e,g, $\left\langle \theta_{n},\theta^{*}\right\rangle \rightarrow\left\langle \theta_{0},\theta^{*}\right\rangle $
for all identical dual $\theta^{*}\in\mathscr{H}^{*}\left(=\mathscr{H}\right)$.
We denote the limit in weak topology as $\textrm{w-}\lim_{n\rightarrow\infty}\theta_{n}$.

\section{Mosco Convergence}

First, we introduce a mode of convergence, \emph{Mosco convergence},
for proper lower semi-continuous (l.s.c.) convex functions on a real
separable Hilbert space. For l.s.c. convex functions on a finite dimensional
Euclidean space, point-wise convergence is equivalent to locally uniform
convergence. For functions defined on an infinite-dimensional space,
however, this is not the case. Mosco convergence, on the other hand,
still ensures $\arg\min$ convergence of l.s.c. convex functions on
an infinite-dimensional space, though it is weaker than locally uniform
convergence. In this section, we also provide preliminary results
related to Mosco convergence for later use. 

Mosco convergence and similar concepts in a non-stochastic environment
are considered in \cite{Mosco_69}, \cite{Attouch_84} and \cite{Beer_93}.
Mosco convergence is particularly useful in the context of functional
optimization, making it well suited to M-estimation. 
\begin{defn}
{[}Mosco Convergence{]}\\
 Let $f_{n}:\mathscr{{H}\rightarrow\left(-\infty,\infty\right],\ }n=1,2,\dots$
be a sequence of proper l.s.c. convex functions. $f_{n}$ is said
to be Mosco-convergent to the l.s.c. convex function $f:\mathscr{{H}\rightarrow(-\infty,\infty]}$
if and only if the following two conditions hold.\\
(M1) For each $\theta\in\mathscr{H}$, there exist a convergent sequence
$\theta_{n}\overset{s}{\rightarrow}\theta$ such that ${\displaystyle \limsup_{n}f_{n}\left(\theta_{n}\right)\leq f\left(\theta\right)}$.\\
(M2) ${\displaystyle \liminf_{n}f_{n}\left(\theta_{n}\right)}\geq f\left(\theta\right)$
whenever $\theta_{n}\overset{w}{\rightarrow}\theta$. \\
In this paper, we let ``$f_{n}\stackrel{M}{\rightarrow}f$'' denote
``$f_{n}$ Mosco-converges to $f$.'' 
\end{defn}
The variational properties of Mosco convergence are given by the following
theorem (Theorem 1.10 in \cite{Attouch_84}), which ensures the convergence
of both empirical minimizer and empirical minimum value of the objective
function to the true ones. Suppose $\arg\min f_{n}\neq\textrm{Ø}$,
and existence of $\arg\min f_{n}$ and $\inf f_{n}$ are proved in
Appendix A.3.
\begin{thm}
\label{thm:argmin} We assume the same definitions for $f_{1},f_{2},\cdots$
and $f$. If $f_{n}\stackrel{M}{\rightarrow}f$, then 
\begin{alignat*}{1}
\limsup_{n\rightarrow\infty}\left(\arg\min f_{n}\right) & \subset\arg\min f,
\end{alignat*}
in the weak topology, e.g.,
\begin{alignat*}{1}
\left\langle \arg\min f_{n},h\right\rangle  & \rightarrow\left\langle \arg\min f,h\right\rangle \quad\left(\forall h\in\mathscr{H}^{*}\right),
\end{alignat*}
 where the $\limsup$ is defined as\\
\begin{alignat*}{1}
\limsup_{n\rightarrow\infty}F_{n} & \triangleq\left\{ \textrm{w-}\lim_{n\rightarrow\infty}y_{n_{k}}\ :\ y_{n_{k}}\in F_{n_{k}}\ \textrm{for some}\ n_{k}\rightarrow\infty\right\} .
\end{alignat*}
If there is a weakly compact set $K\subset\mathscr{H}$ such that
$\arg\min f_{n}\subset K$ for all $n$ , then $\lim_{n\rightarrow\infty}\left(\inf f_{n}\right)=\inf f.$
\end{thm}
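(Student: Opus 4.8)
The plan is to treat the two conclusions separately, exploiting the complementary roles of the two Mosco conditions: (M1) furnishes strongly convergent \emph{recovery sequences}, giving upper bounds on limiting values, while (M2) provides lower bounds along any weakly convergent sequence. Throughout, the lever that links them is the elementary minimality inequality $f_n(y_n)\le f_n(\theta_n)$ for minimizers $y_n\in\arg\min f_n$. A preliminary observation I would record is that Mosco convergence is inherited by subsequences, so both (M1) and (M2) may be invoked along any $n_k\to\infty$; for (M2) this follows by padding the chosen subsequence into a full sequence with the constant value $\theta$ and noting that the $\liminf$ of the merged sequence equals the minimum of the two $\liminf$s.

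For the inclusion $\limsup_n(\arg\min f_n)\subset\arg\min f$, I would fix an arbitrary element $\theta_0$ of the left-hand set. By the definition of the weak $\limsup$, there are a subsequence $n_k\to\infty$ and minimizers $y_{n_k}\in\arg\min f_{n_k}$ with $y_{n_k}\overset{w}{\rightarrow}\theta_0$. To show $\theta_0\in\arg\min f$, fix an arbitrary competitor $\theta\in\mathscr{H}$ and use (M1) to produce a recovery sequence $\theta_n\overset{s}{\rightarrow}\theta$ with $\limsup_n f_n(\theta_n)\le f(\theta)$. Minimality gives $f_{n_k}(y_{n_k})\le f_{n_k}(\theta_{n_k})$ for every $k$, and applying (M2) to $y_{n_k}\overset{w}{\rightarrow}\theta_0$ closes the chain
\begin{alignat*}{1}
f(\theta_0)\;\le\;\liminf_k f_{n_k}(y_{n_k})\;\le\;\limsup_k f_{n_k}(\theta_{n_k})\;\le\;f(\theta).
\end{alignat*}
Since $\theta$ was arbitrary, $f(\theta_0)=\inf f$, i.e.\ $\theta_0\in\arg\min f$.

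For the convergence of the infima I would split into two inequalities. The upper bound requires no compactness: for each $\theta$, (M1) yields $\theta_n\overset{s}{\rightarrow}\theta$ with $\inf f_n\le f_n(\theta_n)$, so $\limsup_n(\inf f_n)\le f(\theta)$, and taking the infimum over $\theta$ gives $\limsup_n(\inf f_n)\le\inf f$. For the lower bound the weak compactness of $K$ enters. Picking minimizers $y_n\in\arg\min f_n\subset K$, I would pass to a subsequence realizing $\liminf_n(\inf f_n)=\lim_k f_{n_k}(y_{n_k})$, and then, using weak compactness, to a further subsequence with $y_{n_k}\overset{w}{\rightarrow}\theta_0\in K$. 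Condition (M2) then gives $\lim_k f_{n_k}(y_{n_k})\ge f(\theta_0)\ge\inf f$, so $\liminf_n(\inf f_n)\ge\inf f$. Combining the two inequalities yields $\lim_n(\inf f_n)=\inf f$.

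The step I expect to be the main obstacle is the lower bound on the infima: in infinite dimensions the minimizers $y_n$ need not possess any weakly convergent subsequence, so without the weak compactness hypothesis on $K$ the argument collapses---this is exactly why the second assertion is stated conditionally, whereas the first is unconditional (there the weak limit defining the element of the $\limsup$ is supplied for free). The remaining delicacy is purely bookkeeping: ensuring that (M2) is legitimately applied along the chosen subsequence, which the subsequence-stability of Mosco convergence guarantees.
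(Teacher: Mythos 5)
Your proof is correct and follows essentially the same route as the paper's source: the paper gives no proof of its own here, deferring to Theorem 1.10 of Attouch (1984), and your argument—recovery sequences from (M1), the liminf inequality (M2) applied along the weakly convergent subsequence of minimizers, linked by the minimality inequality $f_{n}\left(y_{n}\right)\le f_{n}\left(\theta_{n}\right)$, plus weak sequential compactness of $K$ for the convergence of infima—is precisely that classical proof. The subsequence-stability of (M2) via padding and the split of $\lim_{n}\left(\inf f_{n}\right)=\inf f$ into the unconditional upper bound and the compactness-dependent lower bound are both handled correctly.
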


It is difficult to prove Mosco convergence directly in general settings.
Fortunately, several equivalence conditions for Mosco convergence
are known in the literature. One of the most convenient conditions
for Mosco convergence is point-wise convergence of subdifferentials
of functions. 

To deal with this mode of convergence, we introduce several basic
tools in convex analysis: subdifferential and resolvent. For more
details and proofs on these subjects, see \cite{Aubin-Frankowska_90}.
For fixed $Z\in E$, we can define a set-valued mapping $\partial\rho\left(\theta,Z\right):\Theta\times E\rightarrow\mathscr{H}$
by 
\begin{alignat*}{1}
\partial\rho\left(\theta,Z\right) & =\left\{ \theta\in\mathscr{H}:\ {}^{\forall}\zeta\in\mathscr{H},\ \rho\left(\zeta,Z\right)\geq\rho\left(\theta,Z\right)+\left\langle \zeta-\theta,\ \theta\right\rangle \right\} .
\end{alignat*}
Such $\partial\rho\left(\theta,\cdot\right)$ is said to be the \textit{subdifferential}
of $\rho$ at $\theta$. For each fixed $\theta$, $\partial\rho\left(\theta,Z\right)$
is considered as a possibly set-valued function of $Z$. We may regard
$\partial\rho\left(\theta,Z\right)$ as a generalized derivative of
$\rho$ at $\theta$, for each fixed $Z$. If $\rho$ is G\^{a}teaux
differentiable at $\theta$ and has a continuous G\^{a}teaux derivative
$\nabla\rho\left(\theta\right)$, then $\partial\rho\left(\theta,Z\right)=\nabla\rho\left(\theta,Z\right)$. 
\begin{example*}[$L_{1}$ regression(continued)]
 The criterion function $\rho\left(\theta,Z\right)=\left|y-\left\langle x,\theta\right\rangle \right|$
is a proper l.s.c. convex function and has the subdifferential such
that
\begin{alignat*}{1}
\partial\rho\left(\theta,Z\right) & =\begin{cases}
\textrm{sgn}\left(y-\left\langle x,\theta\right\rangle \right)x, & \text{{if}}\ y-\left\langle x,\theta\right\rangle \neq0;\\
\left[-1,1\right]x, & \text{{if}}\ y-\left\langle x,\theta\right\rangle =0,
\end{cases}
\end{alignat*}
where $\textrm{sgn}\left(y-\left\langle x,\theta\right\rangle \right)=\begin{cases}
1, & \text{{if}}\ \left(y-\left\langle x,\theta\right\rangle \right)>0\\
-1, & \text{{if}}\ \left(y-\left\langle x,\theta\right\rangle \right)<0.
\end{cases}$ \end{example*}
\begin{proof}
Proof is given in Appendix A.1.
\end{proof}

\begin{lem}[``Optimization Theory'' Indicator Function]
\label{-The-indicator} The indicator function $\Psi_{A}$ is defined
by
\begin{alignat*}{1}
\Psi_{A}\left(\theta\right) & =\begin{cases}
0 & \left(\theta\in A\right)\\
\infty & \left(\theta\notin A\right)
\end{cases}
\end{alignat*}
where the set $A$ is a convex subset of $\Theta$. The normal cone
$N_{A}\left(a\right)$ is defined by
\begin{alignat*}{1}
N_{A}\left(a\right) & =\left\{ \theta^{\star}\in\mathscr{H}:\ \left\langle \theta-a,\theta^{\star}\right\rangle \leqq0,\ \forall\theta\in A\right\} .
\end{alignat*}
Then, $N_{A}\left(a\right)=\partial\Psi_{A}\left(a\right)$, where
$N_{A}\left(a\right)$ is such that $0\in N_{A}\left(a\right)$.\end{lem}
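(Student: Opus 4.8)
The plan is to unfold both definitions and verify the set equality directly, since the statement is essentially a dictionary translation between the language of subdifferentials and that of normal cones. First I would fix $a\in A$; if $a\notin A$ then $\Psi_{A}(a)=\infty$ and, by the usual convention, $\partial\Psi_{A}(a)=\varnothing$, so there is nothing to prove (the normal cone is likewise only considered at points of $A$). With $a\in A$ we have $\Psi_{A}(a)=0$, and by the definition of the subdifferential given above, $\theta^{\star}\in\partial\Psi_{A}(a)$ holds if and only if
\[
\Psi_{A}(\zeta)\geq\Psi_{A}(a)+\langle\zeta-a,\theta^{\star}\rangle=\langle\zeta-a,\theta^{\star}\rangle\qquad\text{for all }\zeta\in\mathscr{H}.
\]

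The key step is then to split this family of inequalities according to whether $\zeta$ lies in $A$. When $\zeta\notin A$ we have $\Psi_{A}(\zeta)=\infty$, so the inequality holds vacuously and imposes no constraint on $\theta^{\star}$. When $\zeta\in A$ we have $\Psi_{A}(\zeta)=0$, so the inequality becomes $\langle\zeta-a,\theta^{\star}\rangle\leq0$. Hence the subgradient condition reduces to requiring $\langle\zeta-a,\theta^{\star}\rangle\leq0$ for every $\zeta\in A$, which, after renaming $\zeta$ as $\theta$, is precisely the defining condition of the normal cone $N_{A}(a)$. This establishes $\partial\Psi_{A}(a)=N_{A}(a)$.

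Finally I would record the trivial remark that $0\in N_{A}(a)$, since $\langle\theta-a,0\rangle=0\leq0$ for all $\theta\in A$. I do not expect any genuine obstacle here: the proof is a one-line case analysis. The only points deserving a word of care are the convention that the subdifferential is empty (and the normal cone not considered) when $a\notin A$, and the observation that it is exactly the $+\infty$ values of $\Psi_{A}$ off $A$ that make the subgradient inequality automatic there — these infinite values are what encode the one-sided, conical structure of $N_{A}(a)$ in place of a two-sided gradient identity.
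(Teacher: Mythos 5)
Your proof is correct and follows essentially the same route as the paper's: unfold the subgradient inequality at $a\in A$, observe that the inequality is vacuous for points outside $A$ (where $\Psi_{A}=\infty$) and reduces to $\left\langle \theta-a,\theta^{\star}\right\rangle \leq0$ for points in $A$, which is exactly the normal cone condition. If anything, your version is slightly more careful than the paper's, since you make explicit both the vacuous case $\zeta\notin A$ and the convention for $a\notin A$, which the paper leaves implicit by quantifying only over $\theta\in A$.
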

\begin{proof}
a
\begin{alignat*}{1}
\theta^{\star}\in\partial\Psi_{A}\left(a\right) & \Leftrightarrow\Psi_{A}\left(a\right)+\left\langle \theta-a,\theta^{\star}\right\rangle \leqq\Psi_{A}\left(\theta\right)\ \left(\forall\theta\in A\right)\\
 & \Leftrightarrow\left\langle \theta-a,\theta^{\star}\right\rangle \leqq\Psi_{A}\left(\theta\right)\ \left(\forall\theta\in A\right)\\
 & \Leftrightarrow\left\langle \theta-a,\theta^{\star}\right\rangle \leqq0\ \left(\forall\theta\in A\right)\\
 & \Leftrightarrow\theta^{\star}\in N_{A}\left(a\right)
\end{alignat*}
Then, $N_{A}\left(a\right)=\partial\Psi_{A}\left(a\right)$.
\end{proof}

Subdifferential operator for proper l.s.c. convex funtions holds distributive
law:
\begin{alignat*}{1}
\partial\left(f_{1}+f_{2}\right) & =\partial f_{1}+\partial f_{2}
\end{alignat*}
where $f_{1}$ and $f_{2}$ are proper l.s.c. convex functions on
$\mathscr{H}$ (see Theorem 3.16. in \cite{Phelps_92}). When $\mathscr{H}$
is real separable, subdifferential operator is exchangeable with respect
to integral (\cite{Clarke_83} page 76.):
\begin{alignat*}{2}
\partial f\left(\theta\right) & =\partial\int_{E}f\left(\theta,Z\right)\mathbb{P}_{Z}\left(dZ\right) & =\int_{E}\partial f\left(\theta,Z\right)\mathbb{P}_{Z}\left(dZ\right).
\end{alignat*}

\begin{example*}[$L_{1}$ regression (continued).]
 The limit criterion $\mathbb{E}\left[\left|y-\left\langle x,\theta\right\rangle \right|\right]$
is convex function and has the subdifferential
\begin{alignat*}{1}
\partial\mathbb{E}\left[\left|y-\left\langle x,\theta\right\rangle \right|\right] & =\mathbb{E}\left[\partial\left|y-\left\langle x,\theta\right\rangle \right|\right],
\end{alignat*}
and 
\begin{equation}
\begin{aligned}\mathbb{E}\left[\partial\left|y-\left\langle x,\theta\right\rangle \right|\right] & =\mathbb{E}\left[x\cdot\textrm{sgn}\left(y-\left\langle x,\theta\right\rangle \right)\right]\\
 & =\mathbb{E}\left[\mathbb{E}\left[x\left\{ 1-2\mathbb{I}\left(y-\left\langle x,\theta\right\rangle \leq0\right)\right\} \left|x\right.\right]\right]\\
 & =\mathbb{E}\left[x\left\{ 1-2P_{\varepsilon}\left(q-\left\langle x,\theta\right\rangle \left|x\right.\right)\right\} \right].
\end{aligned}
\label{eq:sub_LAD}
\end{equation}
where $P_{\varepsilon}\left(\cdot\mid x\right)$ is the distribution
function of $\varepsilon$ conditional on $x$.
\end{example*}

In this paper, we assume that the subdifferential $\partial\rho$
is selected and measurable in $Z$. In general, because $\partial\rho$
is a set-valued mapping, the selection is not unique. Nonetheless,
we can show that not only such measurable selections exists but also
the set of all measurable selector $S_{\partial\rho}$ is identical
to $\partial\rho$. 
\begin{prop}
\label{prop:3}There exists a measurable selector of the subdifferential
$\partial f$, i.e., $S_{\partial f}\neq\emptyset$. Moreover, $S_{\partial f}=\partial f$. \end{prop}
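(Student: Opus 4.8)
The plan is to read Proposition~\ref{prop:3} as a measurable-selection statement for the set-valued map $Z\mapsto\partial f\left(\theta,Z\right)$ (for each fixed $\theta$, with $f=\rho$) and to dispatch it with the classical selection theory for multifunctions valued in a Polish space, using that a real separable Hilbert space $\mathscr{H}$ is Polish.

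First I would record the structural properties of the values. Since $f\left(\cdot,Z\right)$ is proper l.s.c.\ convex, each $\partial f\left(\theta,Z\right)$ is a closed convex subset of $\mathscr{H}$, and it is nonempty whenever $\theta$ lies in the interior of the effective domain of $f\left(\cdot,Z\right)$; this is transparent in the $L_{1}$ example, where the value is either a singleton or a segment $\left[-1,1\right]x$. The decisive point is measurability of the multifunction: I would argue that $f$ is a \emph{normal integrand}, i.e.\ jointly measurable in $\left(\theta,Z\right)$ and l.s.c.\ convex in $\theta$, from which $Z\mapsto\partial f\left(\theta,Z\right)$ inherits weak measurability, meaning that $\left\{ Z:\partial f\left(\theta,Z\right)\cap U\neq\emptyset\right\} $ is measurable for every open $U\subset\mathscr{H}$.

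For the first assertion $S_{\partial f}\neq\emptyset$, I would invoke the Kuratowski--Ryll-Nardzewski theorem: a measurable multifunction with nonempty closed values into a Polish space admits a measurable selector. Applied to $Z\mapsto\partial f\left(\theta,Z\right)$ this produces a measurable $s$ with $s\left(Z\right)\in\partial f\left(\theta,Z\right)$, so $S_{\partial f}\neq\emptyset$. For the identification $S_{\partial f}=\partial f$, the inclusion of selector values into the subdifferential holds by definition, so the content is the reverse inclusion: every subgradient is attained by some measurable selector. I would obtain this from a Castaing representation --- a measurable closed-valued multifunction on a Polish space admits a countable family $\left\{ s_{k}\right\} $ of measurable selectors with $\partial f\left(\theta,Z\right)=\overline{\left\{ s_{k}\left(Z\right):k\geq1\right\} }$ for every $Z$ --- so that, the values being closed, each $\partial f\left(\theta,Z\right)$ coincides exactly with the set of values realized by measurable selectors.

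The main obstacle I anticipate is the rigorous verification of measurability of $\partial f$, i.e.\ the normal-integrand property of $\rho$ together with weak measurability of the induced subdifferential multifunction; this is where joint measurability in $\left(\theta,Z\right)$ and l.s.c.\ convexity in $\theta$ must be combined carefully, and where nonemptiness of the values must be pinned down away from the boundary of the domain. Once measurability is established, both the Kuratowski--Ryll-Nardzewski selection theorem and the Castaing representation apply essentially verbatim, and the two claims follow.
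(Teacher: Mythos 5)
Your proposal is correct in outline and lands on the same underlying theory, but it routes the two claims through different key lemmas than the paper does. The paper works entirely inside the Hiai--Umegaki framework: measurability of the multifunction $z\mapsto\partial f(\alpha,z)$ is obtained concretely, by observing that $\gamma\in\partial f(\alpha,z)$ iff $h(\gamma,z)\triangleq\inf_{\left|\beta-\alpha\right|\leq1}\left\{ f\left(\beta,z\right)-f\left(\alpha,z\right)-\left\langle \beta-\alpha,\gamma\right\rangle \right\} \geq0$ and invoking the measurable-infimum lemma (Lemma \ref{Hiai-Umegaki2.1.}); the identification $S_{\partial f}=\partial f$ is then obtained not from a selection theorem per se but from a structural observation specific to subdifferentials, namely that the set of measurable selectors is \emph{decomposable} --- gluing two subgradient selectors as $\mathbb{I}_{A}\gamma_{1}+\mathbb{I}_{\Omega\setminus A}\gamma_{2}$ preserves the subgradient inequality pointwise --- after which Hiai--Umegaki's characterization theorem (Lemma \ref{Hiai-Umegaki3.1.}) together with their Castaing-type Lemma \ref{Hiai-Umegaki1.1} closes the argument. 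You instead verify measurability via the normal-integrand property, get existence from Kuratowski--Ryll-Nardzewski, and get the identification from the Castaing representation alone; this is the classical Polish-space route and is perfectly viable, and it makes the existence claim more explicit than the paper's rather terse treatment. Two caveats in your version: first, the measurability verification you flag as the main obstacle is precisely what the paper's $h$-function device accomplishes, so you would need an argument of comparable substance there; second, your last step overstates what Castaing gives --- the representation yields $\partial f\left(\theta,Z\right)=\overline{\left\{ s_{k}\left(Z\right):k\geq1\right\} }$, i.e.\ the identification at the level of closures, and to conclude that every individual subgradient at a fixed $Z_{0}$ is attained by a measurable selector you need an additional patching argument (redefine a selector at the single point $Z_{0}$, which requires singletons to be measurable, a condition not guaranteed on the paper's arbitrary sample space $E$). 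The paper's decomposability route sidesteps this pointwise-realization subtlety entirely, since Hiai--Umegaki's theorem identifies the multifunction with its (closed, decomposable) set of selections directly; that is what the equation $S_{\partial f}=\partial f$ is actually asserting.
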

\begin{proof}
Proof is given in Appendix A.2.
\end{proof}
Consider a map
\begin{alignat*}{1}
J_{\lambda}^{\partial f}\theta & =\left\{ z\in\mathscr{H}\ :\ z+\lambda\partial f\left(z\right)\ni\theta\right\} .
\end{alignat*}
Such a map should be single-valued (on Proposition 3.5.3 in \cite{Aubin-Frankowska_90}).
Such $J_{\lambda}^{\partial f},\lambda>0$ are called \textit{resolvents}
of $\partial f$ and denoted by 
\begin{alignat*}{1}
^{\forall}\lambda>0,\quad J_{\lambda}^{\partial f} & =\left(I+\lambda\partial f\right)^{-1}.
\end{alignat*}

The following theorem states the equivalence between Mosco convergence
and strong convergence of resolvents and G-convergence of subdifferential
operators. The proofs are given in Theorem 3.26. and Theorem 3.66.
of \cite{Attouch_84}.
\begin{thm}
\label{thm:=00005BAttouch(1984)=00005D} Let $\mathscr{H}$ be a real
separable Hilbert space. Let $\left(f_{n}\right)_{n\in\mathbb{N}}$,
$f_{n}:\ \mathscr{H}\rightarrow\left(-\infty,\infty\right],\ ^{\forall}n\in\mathbb{N}$
be a proper l.s.c. convex function. The following statements are equivalent.
\\
(1) $f_{n}\stackrel{M}{\longrightarrow}f_{0}$.\\
(2) $^{\forall}\lambda>0$, $^{\forall}\theta\in\mathscr{H}$, $J_{\lambda}^{\partial f_{n}}\theta\rightarrow J_{\lambda}^{\partial f}\theta$
strongly in $\mathscr{H}$ as $n$ goes to $\infty$.\\
(3) $\begin{cases}
\partial f_{n}\overset{G}{\rightarrow}\partial f_{0},\\
^{\exists}\left(\theta_{0},\eta_{0}\right)\in\partial f_{0}\ ^{\exists}\left(\theta_{n},\eta_{n}\right)\in\partial f_{n}\ such\ that\ \theta_{n}\overset{s}{\rightarrow}\theta_{0},\ \eta_{n}\overset{s}{\rightarrow}\eta_{0},\ f_{n}\left(\theta_{n}\right)\rightarrow f_{0}\left(\theta_{0}\right),
\end{cases}$\\
where $\partial f_{n}\overset{G}{\rightarrow}\partial f_{0}$ means
that, for every $\left(\theta_{0},\eta_{0}\right)\in\partial f_{0}$,
there exists a sequence $\left(\theta_{n},\eta_{n}\right)\in\partial f_{n}$
such that $\theta_{n}\rightarrow\theta_{0}$ strongly in $\mathscr{H}$,
$\eta_{n}\rightarrow\eta_{0}$ strongly in $\mathscr{H}^{*}\left(=\mathscr{H}\right)$.
\end{thm}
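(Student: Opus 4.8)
The plan is to route everything through the Moreau--Yosida regularization, which is the canonical bridge between a convex function, its subdifferential, and its resolvent. For $\lambda>0$ write the Moreau envelope
\[
f_{\lambda}\left(\theta\right)=\min_{z\in\mathscr{H}}\left\{ f\left(z\right)+\tfrac{1}{2\lambda}\left\Vert z-\theta\right\Vert ^{2}\right\} ,
\]
and recall the three standard facts I would take as given, each valid on any real Hilbert space: the unique minimizer above is $J_{\lambda}^{\partial f}\theta$; the envelope $f_{\lambda}$ is convex and Fr\'echet differentiable with $\nabla f_{\lambda}\left(\theta\right)=\lambda^{-1}\left(\theta-J_{\lambda}^{\partial f}\theta\right)$ and $f_{\lambda}\uparrow f$ as $\lambda\downarrow0$; and the resolvent is firmly nonexpansive, $\left\Vert J_{\lambda}^{\partial f}x-J_{\lambda}^{\partial f}y\right\Vert ^{2}\leq\left\langle J_{\lambda}^{\partial f}x-J_{\lambda}^{\partial f}y,\,x-y\right\rangle $. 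These are the only analytic inputs the argument needs.

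For (1)$\Rightarrow$(2), I would fix $\lambda,\theta$ and set $z_{n}=J_{\lambda}^{\partial f_{n}}\theta$, $z=J_{\lambda}^{\partial f}\theta$, with $g_{n}\left(w\right)=f_{n}\left(w\right)+\tfrac{1}{2\lambda}\left\Vert w-\theta\right\Vert ^{2}$ and $g$ its limit analogue. First I would show $\left(z_{n}\right)$ is bounded, using that a recovery sequence from (M1) makes $\limsup_{n}g_{n}\left(z_{n}\right)\leq\limsup_{n}g_{n}\left(w_{n}\right)\leq g\left(w\right)<\infty$, together with coercivity of the quadratic term and a stable affine minorant of the $f_{n}$. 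Passing to a weakly convergent subsequence $z_{n_{k}}\rightharpoonup\bar{z}$, condition (M2) and weak lower semicontinuity of the norm give $g\left(\bar{z}\right)\leq\liminf_{k}g_{n_{k}}\left(z_{n_{k}}\right)\leq g\left(z\right)\leq g\left(\bar{z}\right)$, so by strict convexity $\bar{z}=z$ and every subsequence shares this limit, i.e. $z_{n}\rightharpoonup z$. The chain is then forced to be a chain of equalities, which yields $\left\Vert z_{n}-\theta\right\Vert \to\left\Vert z-\theta\right\Vert $; combined with weak convergence this upgrades to strong convergence $z_{n}\to z$ in $\mathscr{H}$. The converse (2)$\Rightarrow$(1) runs through the envelopes: resolvent convergence gives $f_{n,\lambda}\to f_{\lambda}$ pointwise (once the value term is controlled, see below), and letting $\lambda\downarrow0$ delivers (M2) while near-minimizers of $f_{n,\lambda}$ furnish the recovery sequences for (M1).

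For (2)$\Leftrightarrow$(3) I would argue at the level of the maximal monotone operators $A_{n}=\partial f_{n}$, $A=\partial f$, for which resolvent convergence is equivalent to graph ($G$-) convergence. One direction is essentially algebraic: given $\left(\theta_{0},\eta_{0}\right)\in A$, put $\theta=\theta_{0}+\lambda\eta_{0}$, so $J_{\lambda}^{A}\theta=\theta_{0}$; resolvent convergence gives $z_{n}:=J_{\lambda}^{A_{n}}\theta\to\theta_{0}$, and then $\eta_{n}:=\lambda^{-1}\left(\theta-z_{n}\right)\in A_{n}z_{n}$ satisfies $\eta_{n}\to\eta_{0}$, which is exactly $A_{n}\overset{G}{\rightarrow}A$. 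The reverse implication reconstructs $J_{\lambda}^{A_{n}}\theta$ from a graph-approximating sequence and closes the estimate with firm nonexpansiveness. The remaining content of (3), namely the single matched pair with $f_{n}\left(\theta_{n}\right)\to f_{0}\left(\theta_{0}\right)$, is what pins down the additive constant.

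The main obstacle is precisely this additive constant. Graph convergence of $\partial f_{n}$, hence bare resolvent convergence, determines $f$ only up to a constant: shifting $f_{n}\mapsto f_{n}+c_{n}$ leaves every resolvent unchanged but destroys Mosco convergence when $c_{n}\to\infty$. The Moreau envelope is the correct intermediary because its value $f_{n,\lambda}\left(\theta\right)=f_{n}\left(z_{n}\right)+\tfrac{1}{2\lambda}\left\Vert z_{n}-\theta\right\Vert ^{2}$ does see the constant through the term $f_{n}\left(z_{n}\right)$, whereas $z_{n}=J_{\lambda}^{\partial f_{n}}\theta$ does not. Thus the genuinely delicate step is promoting resolvent convergence to envelope convergence, and this is exactly where the value-matching normalization carried by (3) must be invoked; without it the implication (2)$\Rightarrow$(1) holds only modulo constants. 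The secondary difficulty, the weak-to-strong upgrade in (1)$\Rightarrow$(2), is comparatively routine once firm nonexpansiveness and the weak lower semicontinuity of the norm are in hand.
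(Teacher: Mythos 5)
The paper offers no proof of this theorem at all: the sentence preceding it defers entirely to Theorems 3.26 and 3.66 of Attouch (1984). So the only meaningful comparison is with Attouch's own argument, and your plan is in fact the same one: Moreau--Yosida envelopes, firm nonexpansiveness, and a weak-to-strong upgrade. Your direction (1)$\Rightarrow$(2) is sound in outline: test the minimality of $z_{n}=J_{\lambda}^{\partial f_{n}}\theta$ against an (M1) recovery sequence for $z=J_{\lambda}^{\partial f}\theta$, use (M2) and weak lower semicontinuity of the norm to identify every weak cluster point with $z$, then force $\left\Vert z_{n}-\theta\right\Vert \rightarrow\left\Vert z-\theta\right\Vert$ and hence strong convergence. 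One step you treat as free is not: a ``stable affine minorant of the $f_{n}$'', uniform in $n$, must itself be extracted from Mosco convergence. It is cleaner to avoid it: if $\left\Vert z_{n}\right\Vert \rightarrow\infty$ while $g_{n}\left(z_{n}\right)$ stays bounded above, evaluate $f_{n}$ at $y_{n}=\left(1-t_{n}\right)x_{n}+t_{n}z_{n}$ with $t_{n}=\left\Vert z_{n}-x_{n}\right\Vert ^{-1}$, where $x_{n}$ is the recovery sequence at a point of $\mathrm{dom}\,f$; convexity gives $f_{n}\left(y_{n}\right)\rightarrow-\infty$ along a bounded sequence $\left(y_{n}\right)$, contradicting (M2). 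Your operator-level equivalence of resolvent convergence and graph convergence (Minty surjectivity plus nonexpansiveness) is also correct.

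Your central observation is the most valuable part, and it is right: as transcribed in the paper the theorem is false, because (2) is stated as bare resolvent convergence. Taking $f_{n}=f+c_{n}$ with $c_{n}\rightarrow\infty$ leaves every $J_{\lambda}^{\partial f_{n}}$ unchanged, so (2) holds, while (M1) fails at every point of $\mathrm{dom}\,f$, and the value condition $f_{n}\left(\theta_{n}\right)\rightarrow f_{0}\left(\theta_{0}\right)$ in (3) cannot hold either; thus neither (2)$\Rightarrow$(1) nor (2)$\Rightarrow$(3) is true as written. What Attouch's Theorem 3.26 actually asserts is Mosco convergence $\Leftrightarrow$ pointwise convergence of the envelopes $f_{n,\lambda}\left(\theta\right)\rightarrow f_{\lambda}\left(\theta\right)$, which, unlike the resolvents, do see additive constants; the normalization built into (3) is exactly what Theorem 3.66 adds to graph convergence. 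So your refusal to prove (2)$\Rightarrow$(1) diagnoses an error in the statement rather than a gap in your argument. That said, as a proof of the corrected statement your proposal is still incomplete: the substantive direction --- from graph convergence plus one matched pair with $f_{n}\left(\theta_{n}\right)\rightarrow f_{0}\left(\theta_{0}\right)$ back to Mosco convergence --- is named but never executed. It can be closed as follows: write $f_{n,\lambda}\left(\theta\right)-f_{n,\lambda}\left(\theta_{n}\right)=\int_{0}^{1}\left\langle \nabla f_{n,\lambda}\left(\theta_{n}+s\left(\theta-\theta_{n}\right)\right),\theta-\theta_{n}\right\rangle ds$ with $\nabla f_{n,\lambda}=\lambda^{-1}\left(I-J_{\lambda}^{\partial f_{n}}\right)$ converging pointwise and locally uniformly bounded, and pin the additive constant via the two-sided bound $f_{n}\left(\theta_{n}\right)-\frac{\lambda}{2}\left\Vert \eta_{n}\right\Vert ^{2}\leq f_{n,\lambda}\left(\theta_{n}\right)\leq f_{n}\left(\theta_{n}\right)$ at the normalizing pair; this yields envelope convergence up to $O\left(\lambda\right)$, after which your own $\lambda\downarrow0$ argument recovers (M1) and (M2). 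Without that block the chain of equivalences is not closed.
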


Statement (3) in Theorem \ref{thm:=00005BAttouch(1984)=00005D} is
called G-convergence of monotone operators. This states that point-wise
convergence of all measurable selectors of subdifferential operators
is equivalent to Mosco convergence of functionals. When the subdifferential
is calculable, point-wise convergence of measurable selectors are
easy to verify.
\begin{example*}[$L_{1}$ regression(continued)]
 From the foregoing theorems, it will be seen that the law of large
numbers(LLN) of subdifferential $\partial\rho\left(\theta\right)$
implies the Mosco convergence. From Lemma \ref{lem:9} and the LLN
in Banach spaces for each sequence of mesurable selectors of $\partial\rho\left(\theta\right)$,
we have the LLN of subdirrential $\partial\rho\left(\theta\right)$:\\
\begin{alignat*}{1}
\frac{1}{n}\sum_{i=1}^{n}\partial\rho\left(\theta,Z_{i}\right) & \xrightarrow{P}\mathbb{E}\left[\partial\rho\left(\theta,Z\right)\right]\\
 & =\partial\mathbb{E}\left[\rho\left(\theta,Z\right)\right].
\end{alignat*}
Thus this fact establish the consistency of local functional estimation.
\end{example*}

(2) in the above theorem give a metric that induces the Mosco convergence.
Based on resolvet, \cite{Attouch_84} (p. 365) gives a metric that
induces graph convergence on the space of subdifferential operators:
\begin{alignat*}{1}
d_{G}\left(\partial f,\partial g\right) & \triangleq\sum_{k\in\mathbb{N}}\frac{1}{2^{k}}\inf\left\{ 1,\left\Vert J_{\lambda_{0}}^{\partial f}\theta_{k}-J_{\lambda_{0}}^{\partial g}\theta_{k}\right\Vert \right\} ,
\end{alignat*}
for any subdifferential operators $\partial f$ and $\partial g$
where $\lambda_{0}$ is taken strictly positive and $\left\{ \theta_{k};k\in\mathbb{N}\right\} $
is a dense subset of $\mathscr{H}$. This metric $d_{G}$ induces
the Mosco convergence topology and is complete. Convergence in $d_{G}$
are equivalent to the convergence results in (1)$\sim$(3) in Theorem
\ref{thm:=00005BAttouch(1984)=00005D}.

Hoffman-Jørgensen weak convergence theory performs in a metric space.
Generally, epi-convergence does not usually work with a metric but
a semi-metric. Even if functions $f,g$ are different each other,
it is possible $f$ epi-converge to $g$ (see, Section 3 in \cite{Bucher-Segers-Volgushev_14}).
Fortunately in the case where the functional space is constituted
by convex functions, we can obtain a metric space as described above.
We shall define a weak convergence in the following way. 
\begin{defn}
{[}Mosco Convergence in Distribution{]}\\
A sequence of random elements $f_{n}$ in the space of proper l.s.c.
convex functions $\mathscr{H}\rightarrow\left(-\infty,\infty\right]$
is said to be Mosco converges in distribution to the random element
$f_{0}$ in the space of proper l.s.c. convex functions if $f_{n}\rightsquigarrow f_{0}$
with metric $d_{G}$. We use the notation $f_{n}\stackrel{M}{\rightsquigarrow}f_{0}$.
\end{defn}

\section{Local Asymptotic Normality}

First, we show that the reparametrized objective function admits a
certain quadratic expansion. A common starting point in developing
an asymptotic distribution theory for an M-estimator is to define
a centered stochastic process based on the objective function. Recall
that $F_{n}(\theta)=\frac{1}{n}\sum_{i}\rho\left(\theta,Z_{i}\right)$
is the objective function for the M-estimator (\ref{eq:m-estimator}).
We may define such a centered stochastic process as
\begin{alignat}{1}
H_{n}\left(\theta,t\right) & \triangleq n\left[F_{n}\left(\theta+\frac{1}{\sqrt{n}}t\right)-F_{n}\left(\theta\right)\right],\label{eq:H_n}
\end{alignat}
\begin{alignat}{1}
Q_{0}\left(t\right) & \triangleq\left\langle t,W\right\rangle +\frac{1}{2}\left\langle Vt,t\right\rangle ,\label{eq:Q_0}
\end{alignat}
where $W$ is an $N\left(\boldsymbol{0},A\right)$ random vector in
a Hilbert space and $V$ is a ``Hessian'' operator. $H_{n}\left(\theta_{0},t\right)$
is interpreted as the log likelihood ratio for hypothesis testing
against the local alternative, i.e., $\mathcal{{H}}_{0}:\ \theta=\theta_{0};\ \mathcal{{H}}_{1}:\ \theta=\theta_{0}+\frac{1}{\sqrt{{n}}}t$.
Define the locally asymptotically quadratic (LAQ) as follows.
\begin{defn}[LAQ \cite{LeCum-Yang_00}(p. 120))]
 The convex objective function $F_{n}\left(\theta\right)$ is said
to be locally asymptotically quadratic at $\theta$ if there exists
a random matrix $V_{n,\theta}$ and a random vector $\Delta_{n,\theta}$
such that 
\begin{alignat*}{1}
H_{n}\left(\theta,t\right) & =\left\langle t,\Delta_{n,\theta}\right\rangle +\frac{1}{2}\left\langle V_{n,\theta}t,t\right\rangle +o_{p_{n,\theta}}\left(1\right),
\end{alignat*}
and the matrix $V_{n,\theta}$ and their limit $\left(V_{n,\theta}\rightsquigarrow\right)V_{\theta}$
are almost surely invertible. 
\end{defn}

\begin{rem*}
Recall that locally asymptotically mixed normality (LAMN) is equivalent
to LAQ with a restriction: $\Delta_{n,\theta},V_{n,\theta}$ converge
to normal distributions. Locally asymptotically normality (LAN) is
equivalent to LAMN with the limiting matrix $V_{\theta}$ is deterministic. 
\end{rem*}

\subsection{Second Order Differentiability}

In typical situations, we assume that the function $F_{0}$ has a
quadratic expansion at $\theta_{0}$ and their Hessian is often supposed
to be continuously invertible (Theorem 3.3.1. of \cite{vdVaart-Wellner_96}).
In an infinite-dimensional case, the assumption that the Hessian operator
is continuously invertible is harder to ascertain. However, if the
convex function $F_{0}$ has a\emph{ }\textit{generalized second order
differentiability} (defined later), its ``generalized Hessian''
is continuously invertible. 

Define the Young-Fenchel conjugate $f^{*}$ of convex function $f$
as
\begin{alignat*}{1}
f^{*}\left(\eta\right) & \triangleq\sup_{\theta}\left(\left\langle \eta,\theta\right\rangle -f\left(\theta\right)\right).
\end{alignat*}
The conjugate $f^{*}$ has a strong link between a convex function
$f$ in the second order differentiability. Recall the case of a convex
function defined on finite dimensional parameters. A convex function
$f$ defined on the Euclid space $\mathbb{R}^{d}$ is second order
differentiable and the Hessian $\nabla^{2}f\left(\theta\right)$ of
$f$ at $\theta$ is nondegererate. Then the conjugate function $f^{*}$
is second order differentiable at $y=\nabla f\left(\theta\right)$,
and its Hessian $\nabla^{2}f^{*}\left(\eta\right)$ at $y$ is the
inverse of $\nabla^{2}f\left(\theta\right)$, i.e.,
\begin{alignat*}{1}
\nabla^{2}f\left(\theta\right) & =\left(\nabla^{2}f^{*}\left(\eta\right)\right)^{-1}.
\end{alignat*}
In order to maintain a duality-type of this relation in an infinite-dimensional
space, we shall define the second order differential concepts based
on Mosco convergence. Mosco convergence ensures the continuity of
this type of conjugation (\cite{Kato_89} and \cite{Borwein-Noll_94}). 

Define \textit{second difference quotient} of $f$ at $\theta\in\mathscr{H}$
relative to $\eta^{*}\in\partial f\left(\theta\right)$ as
\begin{alignat*}{1}
\Delta_{f,\theta,\eta,t}\left(h\right) & \triangleq\frac{f\left(\theta+th\right)-f\left(\theta\right)-t\left\langle \eta^{\star},h\right\rangle }{t^{2}}
\end{alignat*}
and define a \textit{purely quadratic} continuous convex function
as
\begin{alignat*}{1}
q\left(h\right) & \triangleq\frac{1}{2}\left\langle Vh,h\right\rangle ,
\end{alignat*}
where $V$ is a closed symmetric positive linear operator. $f$ is
said to have \textit{generalized second order differentiability}\textbf{\emph{
}}at $\theta$ relative to $\eta^{\star}\in\partial f\left(\theta\right)$
if there exists a purely quadratic function $q$ such that the second
order difference quotient $\Delta_{f,\theta,\eta,t}\left(\cdot\right)$
converges to $q\left(\cdot\right)$ in the Mosco sense, i.e.,
\begin{alignat*}{1}
\Delta_{f,\theta,\eta,t}\left(h\right) & \stackrel[t\downarrow0]{M}{\longrightarrow}q\left(h\right).
\end{alignat*}
The closed symmetric positive linear operator $V$ is called the \textit{generalized
Hessian} of $f$ at $\theta$ relative to $\eta\in\partial f\left(\theta\right)$.

Mosco convergence is invariant under Young-Fenchel conjugation, so
that Mosco convergence of $\Delta_{f,\theta,\eta,t}\left(h\right)$
is equivalent to Mosco convergence of $\left(\Delta_{f,\theta,\eta,t}\left(h\right)\right)^{*}=\Delta_{f*,\eta,\theta,t}\left(h\right)$.
And generalized Hessian of $f^{*}$ at $\eta$ relative to $\theta\in\partial f^{*}\left(\eta\right)$
is $V^{-1}$.

Next, we derive sufficient conditions under which the objective function
of M-estimation has generalized second order differentiability. $\partial f$
is called\textbf{ }\textit{weak{*} G\^{a}teaux differentiable} at
\textbf{$\theta$ }if there exists a bounded linear operator $T:\mathscr{H}\rightarrow\mathscr{H}^{*}$
such that
\begin{alignat*}{1}
\lim_{t\rightarrow0}\frac{1}{t}\left(\eta_{t}^{*}-\eta^{*}\right) & =Vh,
\end{alignat*}
in the weak{*} sense for any fixed $h\in\mathscr{H}$ and all $\eta_{t}^{*}\in\partial f\left(\theta+th\right)$,
$\eta^{*}\in\partial f\left(\theta\right)$ where $\partial f\left(\theta\right)$
must consist of a single element $\eta^{*}$ . We use the notation
$T=\nabla\partial f\left(\theta\right)$ for the operator $T$. For
the generalized differentiability, we quote the following result of
\cite{Borwein-Noll_94}.
\begin{thm}
(a variant of Propotion 6.4. of \cite{Borwein-Noll_94}) \\
Let $\left(Z,\mathcal{Z},\mathbb{P}_{Z}\right)$ be a probability
space and $\Theta\subseteq\mathscr{H}$ be a separable Hilbert space.
Suppose $\rho:\Theta\times Z\rightarrow\left(-\infty,\infty\right]$
is measurable on $\left(Z,\mathcal{{Z}},\mathbb{{P}}_{Z}\right)$
and convex at any $\theta\in\Theta$ and define a closed convex integral
functional $f$ on $\Theta\subset\mathscr{H}$ as
\begin{alignat*}{1}
f\left(\theta\right) & =\int_{Z}\rho\left(\theta,z\right)d\mathbb{P}_{Z}\left(z\right).
\end{alignat*}
Then $f$ is generalized second order differentiable at $\theta$
if and only if $\partial\rho$ is weak{*} G\^{a}teaux differentiable
and 
\begin{alignat*}{1}
\mathrm{{ess}}\sup_{z\in Z}\left|\nabla\partial\rho\left(\theta,z\right)\right|<\infty & .
\end{alignat*}

\end{thm}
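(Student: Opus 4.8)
The plan is to reduce the statement about the integral functional $f$ to its integrand $\rho$, and then to route the resulting differentiability of the subdifferential through the resolvent/G-convergence characterization of Mosco convergence. First I would use the exchange of subdifferentiation and integration, valid on a real separable Hilbert space, together with Proposition~\ref{prop:3}, to write $\partial f(\theta)=\int_{Z}\partial\rho(\theta,z)\,d\mathbb{P}_{Z}(z)$. Since weak* G\^{a}teaux differentiability forces $\partial\rho(\theta,\cdot)$ to be single-valued, $\partial f(\theta)$ collapses to a single element $\eta^{*}$, and I would set $V\triangleq\int_{Z}\nabla\partial\rho(\theta,z)\,d\mathbb{P}_{Z}(z)$, which is a closed symmetric positive operator because each $\nabla\partial\rho(\theta,z)$ is a pointwise generalized Hessian of a convex integrand. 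The whole argument then pivots on the scaling identity $\partial_{h}\Delta_{f,\theta,\eta,t}(h)=t^{-1}\bigl(\partial f(\theta+th)-\eta^{*}\bigr)$, obtained from the chain rule for subdifferentials, together with $\partial q=V$; thus the second difference quotient is, up to subdifferentiation, the blow-up of the single monotone operator $\partial f$ at $\theta$.

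For the direction assuming weak* G\^{a}teaux differentiability and the essential bound, the hypothesis gives, for $\mathbb{P}_{Z}$-a.e.\ $z$, that $t^{-1}\bigl(\partial\rho(\theta+th,z)-\partial\rho(\theta,z)\bigr)\to\nabla\partial\rho(\theta,z)h$ weakly, with the difference quotients dominated in norm by $M\|h\|$ where $M=\mathrm{ess\,sup}_{z}\left|\nabla\partial\rho(\theta,z)\right|<\infty$. Dominated convergence, applied to the Bochner integral of the measurable selectors supplied by Proposition~\ref{prop:3}, transfers this to $\partial f$, so that $t^{-1}\bigl(\partial f(\theta+th)-\eta^{*}\bigr)\to Vh$; integrating along the segment yields the real limit $\Delta_{f,\theta,\eta,t}(h)\to\frac{1}{2}\langle Vh,h\rangle=q(h)$, which furnishes condition (M1) with the constant recovery sequence $h_{t}=h$. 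I would then feed the graph convergence $\partial\Delta_{f,\theta,\eta,t}\xrightarrow{G}V$ into Theorem~\ref{thm:=00005BAttouch(1984)=00005D} to conclude $\Delta_{f,\theta,\eta,t}\xrightarrow{M}q$, i.e.\ generalized second order differentiability. For the converse I would run Theorem~\ref{thm:=00005BAttouch(1984)=00005D} in reverse: Mosco convergence of the second difference quotients yields $t^{-1}\bigl(\partial f(\theta+th)-\eta^{*}\bigr)\to Vh$ in the graph sense, and disintegrating the integral representation recovers a.e.\ weak* G\^{a}teaux differentiability of $\partial\rho$, while finiteness of the limiting operator $V$ forces essential boundedness of $\nabla\partial\rho$. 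The invariance of Mosco convergence under Young-Fenchel conjugation then certifies that the object produced is the genuine generalized Hessian, with conjugate inverse $V^{-1}$ as promised.

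The step I expect to be the main obstacle is the passage from the \emph{weak} convergence furnished by weak* G\^{a}teaux differentiability to the \emph{strong} graph convergence demanded by the resolvent characterization of Mosco convergence. Weak operator convergence of positive operators does not in general upgrade to strong operator convergence, so pointwise weak differentiability of the subdifferential cannot by itself deliver condition (M2); a naive convexity estimate $\Delta_{t}(h_{t})\geq\Delta_{t}(g)+\langle\partial\Delta_{t}(g),h_{t}-g\rangle$ stalls precisely at a pairing of two merely weakly convergent sequences. The uniform bound $\mathrm{ess\,sup}_{z}\left|\nabla\partial\rho(\theta,z)\right|<\infty$ is exactly the ingredient that closes this gap: it makes the blow-up operators $\partial_{h}\Delta_{f,\theta,\eta,t}$ equi-Lipschitz and the difference quotients uniformly integrable, which is what allows dominated convergence together with the Minty monotonicity estimate for the resolvents $J^{\partial\Delta_{t}}_{\lambda}$ to promote weak convergence to the strong convergence required by Theorem~\ref{thm:=00005BAttouch(1984)=00005D}. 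This is the delicate equi-integrability content inherited from Proposition 6.4 of \cite{Borwein-Noll_94}, which I would invoke for that estimate rather than reprove, the present statement being its adaptation to the stochastic integral functional.
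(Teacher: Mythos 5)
The paper contains no proof of this statement to compare against: it is explicitly \emph{quoted} as a known result (``we quote the following result of \cite{Borwein-Noll_94}''), so the paper's ``proof'' is simply the citation. Judged on its own terms, your proposal has the right architecture --- exchange of $\partial$ and $\int$ via Proposition \ref{prop:3}, the identity $\partial_{h}\Delta_{f,\theta,\eta,t}(h)=t^{-1}\left(\partial f(\theta+th)-\eta^{*}\right)$, and passage through the G-convergence characterization of Theorem \ref{thm:=00005BAttouch(1984)=00005D} --- and you correctly isolate the decisive difficulty: weak* G\^{a}teaux differentiability supplies only \emph{weak} convergence of the blow-ups, while G-convergence demands \emph{strong} convergence of the subgradient sequence. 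But your resolution of that difficulty is circular. You propose to ``invoke rather than reprove'' Proposition 6.4 of \cite{Borwein-Noll_94} for exactly this step, yet the statement you are asked to prove \emph{is} (a variant of) that proposition; deferring the one genuinely hard estimate to the result being proven leaves the proof empty at its core.

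The circularity is not harmless, because the variant differs from the original precisely where the deferred estimate lives. In Borwein--Noll's setting the integrand is composed with the pointwise values of an $L^{2}$ function, so the relevant difference quotients converge in a finite-dimensional value space; there, a.e.\ differentiability gives strong pointwise convergence, and the essential bound plus dominated convergence upgrades this to norm convergence of the integrated quotients. In the present variant $\theta$ ranges over all of $\mathscr{H}$ and $\partial\rho(\cdot,z)$ takes values in $\mathscr{H}$, so weak* differentiability per $z$ yields only \emph{weak} convergence per $z$, and integrating a dominated, weakly convergent family produces weak --- not strong --- convergence of $t^{-1}\left(\partial f(\theta+th)-\eta^{*}\right)$. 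Thus the (M2)/strong-graph-convergence half of your argument remains open: the ``equi-Lipschitz plus Minty monotonicity'' upgrade is asserted, not established, and it is exactly the point where the transfer from the original proposition to this stochastic variant needs a real argument. The converse direction has the same character: recovering a.e.\ weak* differentiability of $\partial\rho$ from generalized second order differentiability of $f$ (your ``disintegration'' step) requires a measurable-selection and localization argument that the sketch does not supply.
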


\begin{example*}[$L_{1}$ regression(continued)]
 Let $Z=\left(Y,X\right)$ be a random vector, where $Y$ is real-valued
while $X$ is the covariate and $X\in\mathscr{H}$. Note that objective
function of $L_{1}$ regression is
\begin{alignat*}{1}
F\left(\theta\right) & =\mathbb{E}\left[\left|Y-\left\langle x,\theta\right\rangle \right|\right]\\
 & =\mathbb{E}\left[\mathbb{E}\left[\left|Y-\left\langle x,\theta\right\rangle \right|\left|X\right.\right]\right].
\end{alignat*}
Then, $L_{1}$ regression objective function $F\left(\theta\right)$
is generalized second order differentialbe at $\theta$ if and only
if $\partial\mathbb{E}\left[\left|Y-\left\langle x,\theta\right\rangle \right|\left|X\right.\right]$
is weak{*} \^{G}ateaux differentiable and 
\begin{alignat*}{1}
\text{\ensuremath{\mathrm{{ess}}}}\sup_{x\in X}\left|\nabla\partial\mathbb{E}\left[\left|Y-\left\langle x,\theta\right\rangle \right|\left|X\right.\right]\right|<\infty & .
\end{alignat*}
From (\ref{eq:sub_LAD}), weak{*} G\^{a}teaux differentiability of
$\partial\mathbb{E}\left[\left|Y-\left\langle x,\theta\right\rangle \right|\left|X\right.\right]$
at $\theta$ is equivalent to the G\^{a}teaux differentiability of
the distribution function $F_{e}\left(q-\left\langle x,\theta\right\rangle \left|x\right.\right)$
at $\theta$. If the distribution function $F_{e}\left(q-\left\langle x,\theta\right\rangle \left|x\right.\right)$
is G\^{a}teaux differentiable at $\theta$, essential boundedness
of $\textrm{ess}\sup_{x\in X}\left|\nabla\partial\mathbb{E}\left[\left|Y-\left\langle x,\theta\right\rangle \right|\left|X\right.\right]\right|<\infty$
will be automatically satisfied.
\end{example*}

Therefore, in order to obtain invertiblity of ``generalized Hessian'',
we impose the following assumption on $\rho$:
\begin{assumption*}
A\\
$\partial\rho\left(\cdot\right)$ is weak{*} G\^{a}teaux differentiable
at $\theta_{0}$ and 
\begin{alignat*}{1}
\text{\ensuremath{\mathrm{{ess}}}}\sup_{z\in E}\left|\nabla\partial\rho\left(\theta_{0},z\right)\right|<\infty & .
\end{alignat*}

\end{assumption*}

This assumption is a ``low-level'' condition which are sufficient
for locally asymptotically quadratic at $\theta_{0}$ than that of
\cite{Geyer_94}. Of course, this result is attributed to the convexity
of the objective function.

\subsection{LAN}

Define auxiliary stochastic process as
\begin{alignat*}{1}
G_{n}\left(t\right) & \triangleq n\left\langle \frac{1}{\sqrt{n}}t,\partial F_{n}\left(\theta_{0}\right)\right\rangle +n\left[F_{0}\left(\theta_{0}+\frac{1}{\sqrt{n}}t\right)-F_{0}\left(\theta_{0}\right)\right],\\
G_{n}^{\prime}\left(t\right) & \triangleq n\left\langle \frac{1}{\sqrt{n}}t,\partial F_{n}\left(\theta_{0}\right)\right\rangle +\frac{1}{2}\left\langle Vt,t\right\rangle .
\end{alignat*}

We also impose the following assumption. Considering Proposition \ref{prop:3}
: the set of all mesurable selectors of a subdifferential coincides
with its own subdifferential, we denote any measurable selector of
$\partial\rho\left(\cdot\right)$ as itself.
\begin{assumption*}
B\\
Every mesurable selector in $\partial\rho\left(\theta,Z\right)$ has
a bounded variance: $\forall\theta\in\Theta$, $\mathbb{E}\left[\left\Vert \partial\rho\left(\theta,Z\right)\right\Vert ^{2}\right]<\infty$,
and there is a sequence of mesurable selectors satisfying a central
limit theorem in the Hilbert space:
\begin{alignat*}{1}
\mathbb{G}_{n}\partial\rho\left(\theta_{0},Z\right) & \rightsquigarrow N\left(0,A\right),
\end{alignat*}
for some trace class covariance operator $A$.
\end{assumption*}

\begin{prop}
\label{prop:LAN}LAN\end{prop}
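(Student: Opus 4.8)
The plan is to prove Mosco convergence in distribution $H_n(\theta_0,\cdot)\stackrel{M}{\rightsquigarrow}Q_0$, with $Q_0(t)=\langle t,W\rangle+\frac12\langle Vt,t\rangle$, by working entirely at the level of subdifferentials. By Theorem \ref{thm:=00005BAttouch(1984)=00005D}, Mosco convergence is equivalent to G-convergence of subdifferential operators together with the normalization in statement (3), and the metric $d_G$ turns ``Mosco convergence in distribution'' into ordinary Hoffman--J\o rgensen weak convergence in a complete metric space. Thus it suffices to show that the subdifferential $\partial_t H_n(\theta_0,t)=\frac{1}{\sqrt n}\sum_i\partial\rho(\theta_0+\frac{1}{\sqrt n}t,Z_i)$ converges in the G-sense (in distribution) to $\partial Q_0(t)=W+Vt$, and to anchor the normalization at $t=0$, where $H_n(\theta_0,0)=0=Q_0(0)$. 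Convexity is what makes this reduction legitimate: it lets me upgrade convergence of subdifferentials, which I can check pointwise in $t$, to convergence in the Mosco topology, sidestepping the uniform convergence that the Bakhvalov obstruction rules out in infinite dimensions.

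I would route the argument through the two auxiliary processes $G_n$ and $G_n'$ as stepping stones. First I would show $G_n'\stackrel{M}{\rightsquigarrow}Q_0$: since $\theta_0$ minimizes $F_0$ we have $0\in\partial F_0(\theta_0)=\mathbb{E}[\partial\rho(\theta_0,Z)]$, so the centered selector of Assumption B gives $\sqrt n\,\partial F_n(\theta_0)=\mathbb{G}_n\partial\rho(\theta_0)\rightsquigarrow W=N(0,A)$; because the quadratic part of $G_n'$ is the fixed purely quadratic form $\frac12\langle Vt,t\rangle$, the subdifferential $t\mapsto\mathbb{G}_n\partial\rho(\theta_0)+Vt$ is an affine operator whose G-limit is $t\mapsto W+Vt$, by continuity in $d_G$ of the map sending a random vector to the associated affine subdifferential. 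Second, I would show $G_n$ and $G_n'$ are Mosco-asymptotically equivalent: they share the random linear term and differ only by the deterministic quantity $n[F_0(\theta_0+\frac{1}{\sqrt n}t)-F_0(\theta_0)]-\frac12\langle Vt,t\rangle$, which vanishes in the Mosco sense precisely because Assumption A (via the Borwein--Noll theorem) grants $F_0$ generalized second order differentiability at $\theta_0$, so that $n[F_0(\theta_0+\frac{1}{\sqrt n}t)-F_0(\theta_0)]\stackrel{M}{\rightarrow}\frac12\langle Vt,t\rangle$; the same theorem supplies the invertibility of $V$ demanded by the LAN definition.

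The decisive step is to show that $H_n(\theta_0,\cdot)$ and $G_n$ are Mosco-asymptotically equivalent. Writing $F_n=F_0+(F_n-F_0)$ and using $\sqrt n\,\partial F_n(\theta_0)=\mathbb{G}_n\partial\rho(\theta_0)$, the difference collapses to the empirical-process increment
\[
H_n(\theta_0,t)-G_n(t)=\sqrt n\bigl[\mathbb{G}_n\rho(\theta_0+\tfrac{1}{\sqrt n}t)-\mathbb{G}_n\rho(\theta_0)\bigr]-\langle t,\mathbb{G}_n\partial\rho(\theta_0)\rangle,
\]
which I would control at the level of subdifferentials: it is enough that
\[
\sqrt n\bigl[\partial F_n(\theta_0+\tfrac{1}{\sqrt n}t)-\partial F_n(\theta_0)\bigr]-Vt\xrightarrow{P}0
\]
in the graph sense for each fixed $t$. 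Expanding $\partial\rho(\theta_0+\frac{1}{\sqrt n}t,Z_i)-\partial\rho(\theta_0,Z_i)=\frac{1}{\sqrt n}\nabla\partial\rho(\theta_0,Z_i)t+r_{n,i}$ by the weak* G\^ateaux differentiability of Assumption A, the linear part averages to $\frac1n\sum_i\nabla\partial\rho(\theta_0,Z_i)t\to Vt$ by the law of large numbers in the Hilbert space already used for the consistency of the subdifferential.

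The remainder $\frac{1}{\sqrt n}\sum_i r_{n,i}$ is the main obstacle. Each $r_{n,i}$ is $o(1/\sqrt n)$ pointwise, and the essential-sup bound $\mathrm{ess\,sup}_z|\nabla\partial\rho(\theta_0,z)|<\infty$ of Assumption A supplies a uniform Lipschitz envelope $\|r_{n,i}\|\le C/\sqrt n$; but to conclude $\frac{1}{\sqrt n}\sum_i r_{n,i}\xrightarrow{P}0$ I still need uniform integrability of the rescaled remainders together with the second-moment control $\mathbb{E}\|\partial\rho(\theta_0,Z)\|^2<\infty$ of Assumption B, i.e. a genuine stochastic-equicontinuity estimate recast in the subdifferential/Mosco language. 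Once these three equivalences are chained, $H_n(\theta_0,\cdot)\stackrel{M}{\rightsquigarrow}Q_0$ follows; and since $W$ is Gaussian and $V$ is deterministic and invertible, the limit has exactly the LAN form $\langle t,W\rangle+\frac12\langle Vt,t\rangle$.
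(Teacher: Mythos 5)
Your overall architecture is the paper's: reduce Mosco convergence to fixed-$t$ graph convergence of subdifferentials via Theorem \ref{thm:=00005BAttouch(1984)=00005D}, route through the stepping stones $G_n$ and $G_n^{\prime}$, obtain the distributional limit from the Hilbert-space CLT of Assumption B together with the almost-sure representation theorem, and obtain the quadratic part and the invertibility of $V$ from Assumption A via the Borwein--Noll theorem. But your decisive step is genuinely open, and you say so yourself: after Taylor-expanding each summand you must control $\frac{1}{\sqrt n}\sum_i r_{n,i}$, and the per-term envelope $\|r_{n,i}\|\le C/\sqrt n$ only yields an $O(1)$ bound on the sum, so you fall back on an unproved ``stochastic-equicontinuity estimate'' plus uniform integrability. (The envelope itself is also shaky: Assumption A bounds $\nabla\partial\rho$ only \emph{at} $\theta_0$, not on a neighborhood, so it does not give a Lipschitz bound on increments.) As written, the proposal does not prove the first statement of the proposition.

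The paper closes exactly this gap, with no equicontinuity and no expansion of the increment. The key is to compare $\partial H_n(t)$ with $\partial G_n(t)$ --- whose second piece is the \emph{exact mean} $\sqrt n\,\mathbb{E}\left[\partial\rho\left(\theta_0+\tfrac{1}{\sqrt n}t,Z\right)\right]$ --- rather than with the limit object $Vt$ as in your display. Writing $\xi_{ni}=\frac{1}{\sqrt n}\left\{ \partial\rho\left(\theta_0+\tfrac{1}{\sqrt n}t,Z_i\right)-\partial\rho\left(\theta_0,Z_i\right)\right\}$, the difference $\partial H_n(t)-\partial G_n(t)$ is then precisely the centered i.i.d.\ sum $\sum_i\xi_{ni}-\mathbb{E}\left[\sum_i\xi_{ni}\right]$, and
\[
\textrm{Var}\left[\sum_{i=1}^{n}\xi_{ni}\right]\le n\,\mathbb{E}\left[\xi_{n1}^{2}\right]=\mathbb{E}\left[\left\{ \partial\rho\left(\theta_0+\tfrac{1}{\sqrt n}t,Z\right)-\partial\rho\left(\theta_0,Z\right)\right\} ^{2}\right];
\]
the two factors of $n$ cancel, leaving the second moment of a single \emph{unscaled} increment, which tends to zero because the increment tends to zero almost surely (weak* G\^{a}teaux differentiability, Assumption A) and is dominated under the moment bound of Assumption B. Chebyshev then gives $\partial H_n(t)-\partial G_n(t)\xrightarrow{P}0$ for each fixed $t$, which is all that graph convergence demands; only afterwards is the deterministic term $n\left[F_0\left(\theta_0+\tfrac{1}{\sqrt n}t\right)-F_0\left(\theta_0\right)\right]$ replaced by $\frac12\left\langle Vt,t\right\rangle$, as a nonstochastic Mosco limit supplied by Assumption A. In short: center at the exact mean and let the i.i.d.\ structure kill the variance; your remainder problem never arises, and your LLN for $\frac1n\sum_i\nabla\partial\rho(\theta_0,Z_i)t$ is not needed either.
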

\begin{enumerate}
\item $H_{n}\left(t\right)$ Mosco-converges to $G_{n}^{\prime}\left(t\right)$
in probability.
\item $G_{n}^{\prime}\left(t\right)$ converges in law to $Q_{0}\left(t\right)$.
Then, $H_{n}\left(t\right)$ Mosco-converge in law to $Q_{0}\left(t\right)$.
\end{enumerate}

\begin{proof}
We shall prove the first statement. In order that $H_{n}\left(t\right)$
converges in Mosco to $G_{n}\left(t\right)$, we will apply Theorem
\ref{thm:=00005BAttouch(1984)=00005D} to $H_{n}\left(t\right)$ and
$G_{n}\left(t\right).$ All we have to do is to show the graph convergence
of the subdifferential $\partial H_{n}\left(t\right)$ to $\partial G_{n}\left(t\right)$
in probability. Considering proposition \ref{prop:3}, we denote any
measurable selector of $\partial\rho\left(\cdot\right)$ as itself
in the following proof below. Calculate subdifferential of $H_{n},G_{n}$
with respect to $t$, we obtain
\begin{alignat*}{1}
\partial H_{n}\left(t\right) & =\sqrt{n}\partial F_{n}\left(\theta_{0}+\frac{1}{\sqrt{n}}t\right)\\
 & =\frac{1}{\sqrt{n}}\sum_{i=1}^{n}\partial\rho\left(\theta_{0}+\frac{1}{\sqrt{n}}t\right),\\
\partial G_{n}\left(t\right) & =\sqrt{n}\partial F_{n}\left(\theta_{0}\right)+\sqrt{n}\partial F_{0}\left(\theta_{0}+\frac{1}{\sqrt{n}}t\right)\\
 & =\frac{1}{\sqrt{n}}\sum_{i=1}^{n}\partial\rho\left(\theta_{0},Z_{i}\right)+\sqrt{n}\mathbb{E}\left[\partial\rho\left(\theta_{0}+\frac{1}{\sqrt{n}}t,Z\right)\right].
\end{alignat*}
Recall $\partial f_{n}\overset{G}{\rightarrow}\partial f_{0}$ means
that for every $\left(\theta_{0},\eta_{0}\right)\in\partial f_{0}$,
there exists a sequence $\left(\theta_{n},\eta_{n}\right)\in\partial f_{n}$
such that $\theta_{n}\rightarrow\theta_{0}$ strongly in $\mathscr{H}$,
$\eta_{n}\rightarrow\eta_{0}$ strongly in $\mathscr{H}^{*}\left(=\mathscr{H}\right)$.
$\partial H_{n}\overset{G}{\rightarrow}\partial G_{n}$ means that
there exists a sequence of measurable selectors of $\frac{1}{\sqrt{n}}\sum_{i=1}^{n}\partial\rho\left(\theta_{0}+\frac{1}{\sqrt{n}}t,Z_{i}\right)$
such that 
\begin{alignat*}{1}
\frac{1}{\sqrt{n}}\sum_{i=1}^{n}\partial\rho\left(\theta_{0}+\frac{1}{\sqrt{n}}t,Z_{i}\right) & \rightarrow\frac{1}{\sqrt{n}}\sum_{i=1}^{n}\partial\rho\left(\theta_{0},Z_{i}\right)+\sqrt{n}\mathbb{E}\left[\partial\rho\left(\theta_{0}+\frac{1}{\sqrt{n}}t,Z\right)\right],
\end{alignat*}
strongly in $\mathscr{H}$. 

The random variable
\begin{alignat*}{1}
 & \partial\rho\left(\theta_{0}+\frac{1}{\sqrt{n}}t,Z_{i}\right)-\partial\rho\left(\theta,Z_{i}\right),
\end{alignat*}
converges monotonically to non-negative random variable. Because $F_{0}\left(\theta\right)=\mathbb{E}\left[\rho\left(\theta\right)\right]$
is second order differentiable in the generalized sense, 
\begin{alignat*}{1}
\mathbb{E}\left[\lim_{n\rightarrow\infty}\partial\rho\left(\theta_{0}+\frac{1}{\sqrt{n}}t,Z_{i}\right)-\partial\rho\left(\theta_{0},Z_{i}\right)\right] & =0,
\end{alignat*}
so,
\begin{alignat*}{1}
\lim_{n\rightarrow\infty}\partial\rho\left(x\theta+\frac{1}{\sqrt{n}}t,Z_{i}\right)-\partial\rho\left(\theta_{0},Z_{i}\right)=0 & \ \textrm{a.s.}.
\end{alignat*}

Fix $t$ and define a (selected) random variable $\xi_{ni}$ by
\begin{alignat*}{1}
\xi_{ni} & =\frac{1}{\sqrt{n}}\partial\rho\left(\theta_{0}+\frac{1}{\sqrt{n}}t,Z_{i}\right)-\frac{1}{\sqrt{n}}\partial\rho\left(\theta_{0},Z_{i}\right).
\end{alignat*}
 Note that
\begin{alignat*}{1}
\mathbb{E}\left[\frac{1}{\sqrt{n}}\partial\rho\left(\theta_{0}+\frac{1}{\sqrt{n}}t,Z\right)-\frac{1}{\sqrt{n}}\partial\rho\left(\theta_{0},Z\right)\right] & =\mathbb{E}\left[\frac{1}{\sqrt{n}}\partial\rho\left(\theta_{0}+\frac{1}{\sqrt{n}}t,Z\right)\right],
\end{alignat*}
where $\mathbb{E}\left[\frac{1}{\sqrt{n}}\partial\rho\left(\theta_{0}+\frac{1}{\sqrt{n}}t,Z\right)\right]$
is singleton. Therefore, for any selected $\xi_{ni}$,
\begin{alignat*}{1}
\sum_{i=1}^{n}\xi_{ni} & =\partial H_{n}\left(t\right)-\partial G_{n}\left(t\right)+\sqrt{n}\mathbb{E}\left[\partial\rho\left(\theta_{0}+\frac{1}{\sqrt{n}}t,Z\right)\right],
\end{alignat*}
and
\begin{alignat*}{1}
\textrm{Var}\left[\sum_{i=1}^{n}\xi_{ni}\right] & =\mathbb{E}\left[\left(\partial H_{n}\left(t\right)-\partial G_{n}\left(t\right)\right)^{2}\right].
\end{alignat*}
Since $\xi_{n1},\dots,\xi_{nn}$ are i.i.d., we have 
\begin{alignat*}{1}
\textrm{Var}\left[\sum_{i=1}^{n}\xi_{ni}\right] & \leq\sum_{i=1}^{n}\mathbb{E}\left[\xi_{ni}^{2}\right],
\end{alignat*}
for any selected $\xi_{ni}$. We have the equality
\begin{alignat*}{1}
\sum_{i=1}^{n}\mathbb{E}\left[\xi_{ni}^{2}\right] & =n\mathbb{E}\left[\left\{ \frac{1}{\sqrt{n}}\partial\rho\left(\theta_{0}+\frac{1}{\sqrt{n}}t,Z_{i}\right)-\frac{1}{\sqrt{n}}\partial\rho\left(\theta_{0},Z_{i}\right)\right\} ^{2}\right]\\
 & =\mathbb{E}\left[\left\{ \partial\rho\left(\theta_{0}+\frac{1}{\sqrt{n}}t,Z_{i}\right)-\partial\rho\left(\theta_{0},Z_{i}\right)\right\} ^{2}\right].
\end{alignat*}
By weak{*} differentiability of $\mathbb{E}\left[\partial\rho\right]$
at $\theta_{0}$, the limit of any measurable selector of $\partial\rho\left(\theta_{0}+\frac{1}{\sqrt{n}}t,Z_{i}\right)-\partial\rho\left(\theta_{0},Z_{i}\right)$
has expectation zero. From the Assumption B : for every measurable
selector $\mathbb{E}\left[\left\{ \partial\rho\left(\theta,Z_{i}\right)\right\} ^{2}\right]<\infty$
for each $\theta$ in the neighborhood of $\theta_{0}$ and from Lebesgue
dominated convergence theorem, we have 
\begin{alignat*}{1}
\mathbb{E}\left[\left\{ \partial\rho\left(\theta_{0}+\frac{1}{\sqrt{n}}t,Z_{i}\right)-\partial\rho\left(\theta_{0},Z_{i}\right)\right\} ^{2}\right] & \rightarrow0,\qquad\left(n\rightarrow\infty\right).
\end{alignat*}
Thus, ${\displaystyle \textrm{Var}\left[\sum_{i=1}^{n}\xi_{ni}\right]\leq\sum_{i=1}^{n}\mathbb{E}\left[\xi_{ni}^{2}\right]\rightarrow0}$.
By Chebyshev inequality, we have
\begin{alignat*}{1}
\partial H_{n}\left(t\right)-\partial G_{n}\left(t\right) & \stackrel{P}{\rightarrow}0,
\end{alignat*}
for fixed $t$. Then, $H_{n}\left(t\right)$ converges in Mosco to
$G_{n}\left(t\right)$ in probability.

From Assumption A, $F_{0}$ is second order differentiable in generalized
sense:
\begin{alignat*}{1}
\frac{\left\{ F_{0}\left(\theta_{0}+\frac{1}{\sqrt{n}}t\right)-F_{0}\left(\theta_{0}\right)-\frac{1}{\sqrt{n}}\left\langle \partial F_{0}\left(\theta_{0}\right),t\right\rangle \right\} }{\left(\frac{1}{\sqrt{n}}\right)^{2}} & \stackrel{M}{\longrightarrow}\frac{1}{2}\left\langle Vt,t\right\rangle .
\end{alignat*}
Therefore, combining aforementioned result, we obtain the result that
$H_{n}\left(t\right)$ Mosco-converges to $G_{n}^{\prime}\left(t\right)$
in probability.

The second statement of Proposition \ref{prop:LAN} is derived from
Assumption B and a.s. representation theorem (Theorem 1.10.4. of \cite{vdVaart-Wellner_96}).
We get
\begin{alignat*}{1}
\xi_{n}\left(=\frac{1}{\sqrt{n}}\sum_{i=1}^{n}\partial\rho\left(\theta_{0},X_{i}\right)\right) & \rightsquigarrow\xi,
\end{alignat*}
and an almost sure representation $\tilde{\xi}_{n}\rightarrow\tilde{\xi}\ a.s.$,
where $\tilde{\xi}_{n}$ has the same law as $\xi_{n}$ and $\tilde{\xi}$
the same law as $\xi$. This provide the Mosco convergence in distribution
of $G_{n}^{\prime}$ to $Q_{0}$.
\end{proof}

The aforementioned proposition achieves mosco convergence of $H_{n}$
to its limit $Q_{0}$. Note that $t=\sqrt{n}\left(\theta-\theta_{0}\right)$
minimizes $H_{n}\left(t\right)$.

Next, we will also show convergence of the minimizer of $H_{n}$ to
that of $Q_{0}$, provided that the minimizer is almost surely unique.
This follows from the following lemma.
\begin{lem}
\label{lem:9}The minimizer of the function $Q_{0}\left(t\right)=\left\langle t,W\right\rangle +\frac{1}{2}\left\langle Vt,t\right\rangle $
is single valued.\end{lem}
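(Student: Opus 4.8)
The plan is to deduce single-valuedness of the minimizer from \emph{strict} convexity of $Q_{0}$, which in turn reduces to checking that the generalized Hessian $V$ is strictly positive. Recall from Subsection 3.1 that $V$ is a closed symmetric positive linear operator, and that because $F_{0}$ is generalized second order differentiable, the invariance of Mosco convergence under Young--Fenchel conjugation supplies a generalized Hessian $V^{-1}$ for the conjugate; in particular $V$ is almost surely invertible, hence injective. So the substantive point is to upgrade positivity to \emph{positive definiteness} on the form domain of $V$.

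First I would argue that injectivity forces strict positivity. Since $V$ is symmetric and positive, $(h,g)\mapsto\langle Vh,g\rangle$ is a positive semidefinite symmetric bilinear form, so the Cauchy--Schwarz inequality
\[
\left|\langle Vh,g\rangle\right|^{2}\;\leq\;\langle Vh,h\rangle\,\langle Vg,g\rangle
\]
holds on that form domain. If $\langle Vh,h\rangle=0$ for some $h$, the inequality gives $\langle Vh,g\rangle=0$ for every $g$, whence $Vh=0$, and injectivity yields $h=0$. Therefore $\langle Vh,h\rangle>0$ for every nonzero $h$ in the domain.

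Next I would compute the convexity gap of the purely quadratic part $q(t)=\tfrac{1}{2}\langle Vt,t\rangle$. For $t_{1}\neq t_{2}$ in the effective domain and $\lambda\in(0,1)$, polarization gives
\[
q\bigl(\lambda t_{1}+(1-\lambda)t_{2}\bigr)=\lambda\,q(t_{1})+(1-\lambda)\,q(t_{2})-\tfrac{1}{2}\lambda(1-\lambda)\langle V(t_{1}-t_{2}),t_{1}-t_{2}\rangle .
\]
Because the term $\langle\cdot,W\rangle$ is affine, adding it preserves this identity, so $Q_{0}\bigl(\lambda t_{1}+(1-\lambda)t_{2}\bigr)<\lambda Q_{0}(t_{1})+(1-\lambda)Q_{0}(t_{2})$ by the strict positivity just established. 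Hence $Q_{0}$ is strictly convex, and if $t_{1}\neq t_{2}$ were two minimizers attaining the common minimal value, evaluating $Q_{0}$ at their midpoint would produce a strictly smaller value, a contradiction; thus the minimizer is single valued. An equivalent shorter route records the first-order condition: on the form domain $Q_{0}$ is G\^{a}teaux differentiable with $\partial Q_{0}(t)=W+Vt$, so any minimizer solves $Vt=-W$, and invertibility of $V$ delivers the unique solution $t=-V^{-1}W$.

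The hard part will be executing these manipulations without assuming $V$ is bounded: $V$ is only a closed operator, so $q$ is finite merely on a dense form domain, and both the Cauchy--Schwarz step and the polarization identity must be confined to that domain. One must also confirm that any candidate minimizer actually lies there, so that the strict-convexity comparison is legitimate; this is automatic because $Q_{0}$ equals $+\infty$ off the form domain, so no minimizer can escape it, and the same domain consideration is where almost sure invertibility of $V$ is genuinely used in exhibiting $-V^{-1}W$.
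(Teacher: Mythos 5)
Your proof is correct and follows essentially the same route as the paper: both reduce single-valuedness to strict convexity of $Q_{0}$ and derive a contradiction by evaluating $Q_{0}$ at the midpoint of two putative minimizers. You are in fact more careful than the paper, whose proof simply asserts the strict midpoint inequality; your Cauchy--Schwarz/injectivity step supplies exactly the strict positivity of $V$ that the paper's ``$<$'' silently presupposes.
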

\begin{proof}
Let $t_{0}=\arg\min_{t}Q_{0}\left(t\right)$. Suppose there exists
$t_{1}\left(\neq t_{0}\right)$ such that
\begin{alignat*}{1}
\left\langle t_{1},W\right\rangle +\frac{1}{2}\left\langle Vt_{1},t_{1}\right\rangle  & =\left\langle t_{0},W\right\rangle +\frac{1}{2}\left\langle Vt_{0},t_{0}\right\rangle =\alpha.
\end{alignat*}
Then,
\begin{alignat*}{1}
 & \left\langle \frac{t_{1}+t_{0}}{2},W\right\rangle +\frac{1}{2}\left\langle V\frac{t_{1}+t_{0}}{2},\frac{t_{1}+t_{0}}{2}\right\rangle \\
< & \frac{1}{2}\left\langle t_{1},W\right\rangle +\frac{1}{2}\left\langle t_{0},W\right\rangle +\frac{1}{2}\left(\frac{1}{2}\left\langle Vt_{1},t_{1}\right\rangle +\frac{1}{2}\left\langle Vt_{0},t_{0}\right\rangle \right)\\
= & \frac{1}{2}\alpha+\frac{1}{2}\alpha=\alpha.
\end{alignat*}
This means $Q_{0}\left(\frac{t_{1}+t_{0}}{2}\right)<\alpha$, which
is contradiction.
\end{proof}

We apply the previous results to consider the asymptotic distribution
of $\sqrt{n}\left\langle \hat{\theta}-\theta_{0},\theta^{*}\right\rangle $
in the weak topology. 
\begin{cor}
\label{cor:ANormal}Asymptotic Normality\\
Let $W$ be an $N\left(0,A\right)$ distribution. Under Assumption
A and B, we obtain the asymptotic distribution of $\sqrt{n}\left\langle \hat{\theta}_{n}-\theta_{0},\theta^{*}\right\rangle $
as following;
\begin{alignat*}{1}
\sqrt{n}\left\langle \hat{\theta}_{n}-\theta_{0},\theta^{*}\right\rangle  & \rightsquigarrow\left\langle V^{-1}W,\theta^{*}\right\rangle \qquad\forall\theta^{*}\in\Theta
\end{alignat*}
where $V^{-1}$ is generalized Hessian of Young-Fenchel conjugate
of $F_{0}\left(\theta\right)$.\end{cor}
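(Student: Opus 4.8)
The plan is to identify $\hat{t}_{n}\triangleq\sqrt{n}(\hat{\theta}_{n}-\theta_{0})$ as the minimizer of the convex process $H_{n}(t)$ and then upgrade the Mosco convergence in distribution $H_{n}\stackrel{M}{\rightsquigarrow}Q_{0}$ from Proposition \ref{prop:LAN} to convergence of the minimizers via the variational properties of Theorem \ref{thm:argmin}. First I would observe that since $\hat{\theta}_{n}$ minimizes $\theta\mapsto F_{n}(\theta)$ and $H_{n}(t)=n[F_{n}(\theta_{0}+t/\sqrt{n})-F_{n}(\theta_{0})]$ is, as a function of $t$, the composition of $F_{n}$ with the affine bijection $t\mapsto\theta_{0}+t/\sqrt{n}$, scaled by $n>0$ and shifted by the constant $-nF_{n}(\theta_{0})$, the minimizer of $t\mapsto H_{n}(t)$ is exactly $\hat{t}_{n}$. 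Hence the asymptotic law of $\langle\hat{t}_{n},\theta^{*}\rangle$ is governed by the law of $\langle\arg\min H_{n},\theta^{*}\rangle$.

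Next I would compute $\arg\min Q_{0}$ explicitly. Because $Q_{0}(t)=\langle t,W\rangle+\tfrac{1}{2}\langle Vt,t\rangle$ is differentiable with $\partial Q_{0}(t)=W+Vt$, the first-order condition $W+Vt=0$ yields the minimizer $t^{\star}=-V^{-1}W$, which is well defined because $V$ is the generalized Hessian of $F_{0}$ at $\theta_{0}$ and is invertible with inverse $V^{-1}$ the generalized Hessian of the Young--Fenchel conjugate (Assumption A together with the conjugation duality for Mosco second differentiability). Lemma \ref{lem:9} guarantees this minimizer is almost surely unique. Since $W\sim N(0,A)$ is symmetric about the origin, $-V^{-1}W\stackrel{d}{=}V^{-1}W$, so the target law is that of $\langle V^{-1}W,\theta^{*}\rangle$, matching the statement.

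The crux is transferring the $\arg\min$ through the limit. I would invoke the almost sure representation theorem (Theorem 1.10.4 of \cite{vdVaart-Wellner_96}) to realize $H_{n}\stackrel{M}{\rightsquigarrow}Q_{0}$ in the complete metric $d_{G}$ as almost sure Mosco convergence $\tilde{H}_{n}\stackrel{M}{\to}\tilde{Q}_{0}$ of versions with identical laws on a common probability space. For almost every realization, Theorem \ref{thm:argmin} then gives $\limsup_{n}(\arg\min\tilde{H}_{n})\subset\arg\min\tilde{Q}_{0}=\{\tilde{t}^{\star}\}$ in the weak topology, and since the limiting $\arg\min$ is the singleton $\{\tilde{t}^{\star}\}$, every weakly convergent subsequence of $\arg\min\tilde{H}_{n}$ has limit $\tilde{t}^{\star}$. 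Provided the minimizers eventually lie in a fixed weakly compact set, this forces the full weak convergence $\arg\min\tilde{H}_{n}\to\tilde{t}^{\star}$, and translating back gives $\langle\hat{t}_{n},\theta^{*}\rangle\rightsquigarrow\langle t^{\star},\theta^{*}\rangle\stackrel{d}{=}\langle V^{-1}W,\theta^{*}\rangle$ for each fixed $\theta^{*}\in\Theta$.

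The main obstacle is precisely the weak-compactness (tightness) hypothesis of Theorem \ref{thm:argmin}: one must confine the rescaled minimizers $\hat{t}_{n}$ to a fixed weakly compact set, and this is genuinely nontrivial since weak compactness of $\Theta$ does not survive the $\sqrt{n}$-rescaling $\sqrt{n}(\Theta-\theta_{0})$. In a separable Hilbert space weak compactness is equivalent to norm-boundedness (reflexivity and Banach--Alaoglu), so it suffices to establish $\sup_{n}\|\hat{t}_{n}\|<\infty$ along the almost sure representation. I expect this $\sqrt{n}$-tightness to be the infinite-dimensional analogue of Pollard's convexity argument: because $V$ is positive definite, the limit $Q_{0}$ is coercive, and I would argue that if $\|\hat{t}_{n}\|\to\infty$ along a subsequence, then convexity of $H_{n}$ together with the recovery-sequence value $H_{n}(\tilde{t}_{n})\to Q_{0}(t^{\star})$ at the limiting minimizer would force $H_{n}(\hat{t}_{n})$ above its value at an interior point, contradicting minimality. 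Making this coercivity propagation precise --- so that Mosco (epi-)convergence of the convex $H_{n}$ to the coercive quadratic $Q_{0}$ yields a uniform confinement of minimizers --- is the delicate point carrying the weight of the proof.
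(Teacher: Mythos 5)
Your proposal follows essentially the same route as the paper's own proof: Proposition \ref{prop:LAN} gives $H_{n}\stackrel{M}{\rightsquigarrow}Q_{0}$, the almost sure representation theorem (Theorem 1.10.4 of \cite{vdVaart-Wellner_96}) converts this to almost sure Mosco convergence of versions, the variational property of Mosco convergence transfers the $\arg\min$, and Lemma \ref{lem:9} supplies uniqueness of $\arg\min Q_{0}$. Your explicit computation $\arg\min Q_{0}=-V^{-1}W$ together with the Gaussian symmetry $-V^{-1}W\stackrel{d}{=}V^{-1}W$ is left implicit in the paper but is in fact needed to obtain the limit in the stated form, so this is a small improvement in rigor.

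The step you flag as the main obstacle is precisely the step the paper does not address. The paper's proof simply asserts $\lim_{n}\left(\arg\min H_{n}\right)\rightarrow\arg\min Q_{N}$ a.s.\ in the weak topology, citing Theorem \ref{thm:=00005BAttouch(1984)=00005D} (apparently a slip for Theorem \ref{thm:argmin}), and never verifies that the rescaled minimizers $\hat{t}_{n}=\sqrt{n}\left(\hat{\theta}_{n}-\theta_{0}\right)$ remain in a fixed weakly compact, i.e.\ norm-bounded, set. As you correctly observe, Theorem \ref{thm:argmin} only yields the inclusion $\limsup_{n}\left(\arg\min H_{n}\right)\subset\arg\min Q_{0}$, which says that every weak cluster point of the minimizers is the (unique) minimizer of the limit; without norm-boundedness the sequence $\hat{t}_{n}$ may have no weak cluster points at all, making the inclusion vacuous, and Assumption C does not repair this because boundedness of $\Theta$ is destroyed by the $\sqrt{n}$ rescaling of $\sqrt{n}\left(\Theta-\theta_{0}\right)$. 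So your diagnosis is right: this tightness/coercivity step is the infinite-dimensional analogue of the convexity argument of \cite{Pollard_91} and \cite{Kato_09}, it genuinely carries the weight of the conclusion, and neither your sketch nor the paper's proof completes it. In short, your proposal is as complete as the paper's own argument and more candid about where the real work lies.
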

\begin{proof}
From Proposition \ref{prop:LAN}, $H_{n}\left(\theta_{0},\hat{t}\right)$
converges weakly to $Q_{0}\left(t\right)$ in Mosco topology. Applying
a.s. representation theorem(Theorem1.10.4 in \cite{vdVaart-Wellner_96})
we get an almost sure representation $H_{n}\stackrel{M}{\longrightarrow}Q_{0}\ a.s.$.
By Theorem \ref{thm:=00005BAttouch(1984)=00005D} we have
\begin{alignat*}{1}
\lim_{n\rightarrow\infty}\left(\arg\min H_{n}\right) & \rightarrow\arg\min Q_{N}\ a.s.
\end{alignat*}
in the weak topology. This provide 
\begin{alignat*}{1}
\sqrt{n}\left\langle \hat{\theta}_{n}-\theta_{0},\theta^{*}\right\rangle  & \rightsquigarrow\left\langle V^{-1}W,\theta^{*}\right\rangle \qquad\forall\theta^{*}\in\Theta.
\end{alignat*}

\end{proof}

\begin{example*}[$L_{1}$ regression (continued).]
 Suppose the distribution function $F_{e}\left(q-\left\langle x,\theta\right\rangle \left|x\right.\right)$
is G\^{a}teaux differentiable at $\theta$ and denote their differential
as operator $V$. Under Assumption A and B, for any $x_{0}\in\mathscr{H}$,
\begin{alignat*}{1}
\sqrt{n}\left\langle x_{0},\hat{\theta}_{n}-\theta_{0}\right\rangle  & \rightsquigarrow N\left(0,V^{-1}A\right).
\end{alignat*}

\end{example*}

For the implement, we need a consistent estimators of the generalized
Hessian. From the fact of the properties of the generalized differential,
the natural candidates are

\begin{alignat*}{1}
\lim_{h_{n}\rightarrow0}\frac{1}{k_{n}}\left(\hat{\eta}_{k_{n}}^{\star}-\hat{\eta}^{\star}\right)
\end{alignat*}
in the weak{*} sense for any fixed $h\in\mathscr{H}$ and all $\hat{\eta}_{k_{n}}^{\star}\in\partial f\left(\hat{\theta}+k_{n}h\right)$,
$\hat{\eta}^{\star}\in\partial f\left(\hat{\theta}\right)$ .

\subsection{Likelihood Ratio Test Statistic}

Using the previous LAN result, we derives the asymptotic distribution
of the likelihood ratio statistic. Let $A_{n}=\sqrt{n}\left(\Theta-\theta_{0}\right)$
and $A_{n,0}=\sqrt{n}\left(\Theta_{0}-\theta_{0}\right)$. The likelihood
ratio statistic is written by the form
\begin{alignat*}{1}
\Lambda_{n} & =\inf_{t\in A_{n}}H_{n}\left(\theta_{0},t\right)-\inf_{t\in A_{n,0}}H\left(\theta_{0},t\right).
\end{alignat*}
By the previous LAN result, for large $n$, the likelihood ratio process
is similar to the same as in the normal experiment. And by the Mosco
convergence argument in theorem \ref{thm:=00005BAttouch(1984)=00005D},
if the parameter space is weakly compact, the empirical optimal value
of convex function achieve the true optimal.
\begin{assumption*}
C\\
The parameter set $\Theta$ is weakly compact. In a Hilbert space
setting $\Theta\subset\mathscr{H}$, weakly compactness is equal to
boundedness: for all $\theta\in\Theta$,there exists constant $C$
such that $\left\Vert \theta\right\Vert \leq C$.\end{assumption*}
\begin{lem}
\label{lem:LLR}Let $W$ be an $N\left(0,A\right)$ distribution and
repeat (\ref{eq:H_n}); 
\begin{alignat*}{1}
H_{n}\left(\theta,t\right) & =n\left[F_{n}\left(\theta+\frac{1}{\sqrt{n}}t\right)-F_{n}\left(\theta\right)\right].
\end{alignat*}
Let $\hat{t}=\sqrt{n}\left(\hat{\theta}_{n}-\theta_{0}\right)$ denote
this minimizer. Under Assumption A-C, the asymptotic distribution
of the optimal value function
\begin{alignat*}{1}
H_{n}\left(\theta_{0},\hat{t}\right) & =n\left[F_{n}\left(\hat{\theta}_{n}\right)-F_{n}\left(\theta_{0}\right)\right]
\end{alignat*}
is the distribution of $Q_{N}\left(\hat{t}\right)$. \end{lem}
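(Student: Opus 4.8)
The plan is to recognize that, by definition of $\hat t=\sqrt n(\hat\theta_n-\theta_0)$ as the minimizer of $t\mapsto H_n(\theta_0,t)$, the quantity $H_n(\theta_0,\hat t)=n[F_n(\hat\theta_n)-F_n(\theta_0)]$ is exactly the optimal value $\inf_t H_n(\theta_0,t)$. So the task reduces to showing that this random minimum value converges in law to the minimum value $\inf_t Q_0(t)=Q_0(t^\star)$ of the limiting quadratic, where $t^\star=\arg\min Q_0$. The engine is the variational-convergence property of Mosco limits (Theorem \ref{thm:argmin}), which I would transport through the Mosco convergence in distribution already furnished by Proposition \ref{prop:LAN}. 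Since $Q_0$ is strictly convex, Lemma \ref{lem:9} guarantees that $t^\star$ is a single well-defined random vector, so $Q_0(t^\star)$ is unambiguous and is precisely the object written $Q_N(\hat t)$ in the statement, once $\hat t$ is read as its weak limit.

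Concretely, I would proceed in three steps. First, Proposition \ref{prop:LAN} gives $H_n\overset{M}{\rightsquigarrow}Q_0$, i.e.\ narrow convergence with respect to the complete metric $d_G$. Second, I would apply the almost sure representation theorem (Theorem 1.10.4 of \cite{vdVaart-Wellner_96}) to obtain, on a common probability space, copies $\tilde H_n$ and $\tilde Q_0$ of the same laws with $d_G(\tilde H_n,\tilde Q_0)\to0$ almost surely; by the equivalence of (1) and (3) in Theorem \ref{thm:=00005BAttouch(1984)=00005D} this is exactly $\tilde H_n\overset{M}{\to}\tilde Q_0$ a.s. Third, I would invoke Theorem \ref{thm:argmin}: the inclusion $\limsup_n(\arg\min\tilde H_n)\subset\arg\min\tilde Q_0$ holds automatically in the weak topology, whereas the value convergence $\inf\tilde H_n\to\inf\tilde Q_0$ demands a weakly compact set containing all the minimizers.

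The containment is where Assumptions B and C enter. By Corollary \ref{cor:ANormal}, carried through the same representation, the minimizers $\tilde t_n=\arg\min\tilde H_n$ (equal in law to $\hat t$) converge weakly in $\mathscr H$, almost surely, to $t^\star=\arg\min\tilde Q_0$. A weakly convergent sequence in a Hilbert space is norm bounded by the Banach--Steinhaus theorem, so $\sup_n\|\tilde t_n\|\le R(\omega)<\infty$ a.s., and the closed ball of radius $R(\omega)$ is weakly compact. Assumption C (boundedness, hence weak compactness, of $\Theta$) secures existence of the minimizers and, moreover, that the rescaled feasible sets $A_n=\sqrt n(\Theta-\theta_0)$ expand to absorb every fixed bounded set around the origin, so the constraint $t\in A_n$ is not binding in the limit. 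With the minimizers confined to a weakly compact ball, Theorem \ref{thm:argmin} yields $\inf\tilde H_n\to\inf\tilde Q_0$ a.s. Translating back, $\inf H_n=H_n(\theta_0,\hat t)$ while $\inf Q_0=Q_0(t^\star)$, and since the representations preserve laws, almost sure convergence of the copies forces convergence in distribution of the originals: $H_n(\theta_0,\hat t)\rightsquigarrow Q_0(t^\star)$, which is the asserted distribution of $Q_N(\hat t)$.

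The hard part will be exactly this weak-compactness containment of the minimizers, because the functional $f\mapsto\inf f$ is \emph{not} continuous for Mosco convergence; unlike the $\arg\min$ inclusion it cannot be obtained by a continuous-mapping argument and must be earned from the asymptotic tightness of $\hat t$ supplied by the central limit theorem in Assumption B. A secondary subtlety is well-definedness of the limit object: one must lean on the strict convexity of $Q_0$ through Lemma \ref{lem:9} to know $\arg\min Q_0$ is a single random vector, so that evaluating $Q_0$ at its random minimizer and identifying this with the limit of $H_n(\theta_0,\hat t)$ is legitimate despite the informal notation $Q_N(\hat t)$.
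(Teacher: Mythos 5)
Your proof is correct and rides the same engine as the paper's own: Proposition \ref{prop:LAN} for $H_n\stackrel{M}{\rightsquigarrow}Q_0$, the almost sure representation theorem of \cite{vdVaart-Wellner_96} to pass to copies that Mosco-converge almost surely, and the variational property of Theorem \ref{thm:argmin} to turn that into convergence of infima, which are precisely $H_n(\theta_0,\hat t)$ and the optimal value of the limiting quadratic. The one place you genuinely diverge is the step both proofs must face: Theorem \ref{thm:argmin} yields $\inf\tilde H_n\to\inf\tilde Q_0$ only when the sets $\arg\min\tilde H_n$ all sit inside a single weakly compact set. The paper settles this by citing Assumption C together with Theorem \ref{thm:argmin} and says no more; read literally this is loose, since the minimizers $\hat t_n=\sqrt n\,(\hat\theta_n-\theta_0)$ range over $A_n=\sqrt n\,(\Theta-\theta_0)$, whose diameter grows like $\sqrt n$, so weak compactness of $\Theta$ does not by itself hand you a fixed weakly compact container for the $\hat t_n$. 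You instead earn the containment from tightness: through the same representation, Corollary \ref{cor:ANormal} makes the copies $\tilde t_n$ weakly convergent, hence almost surely norm-bounded by Banach--Steinhaus, hence trapped in a weakly compact ball of random radius, to which Theorem \ref{thm:argmin} applies pathwise; Lemma \ref{lem:9} then makes $\arg\min Q_0$ a single random vector so the limit value is unambiguous. So: same skeleton, but your treatment of the weak-compactness step supplies a justification that the paper's three-line proof only gestures at, at the price of a legitimate, non-circular dependence on Corollary \ref{cor:ANormal}, which precedes Lemma \ref{lem:LLR} in the paper.
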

\begin{proof}
From Proposition \ref{prop:LAN}, $H_{n}\left(\theta_{0},\hat{t}\right)$
converges weakly to $Q_{N}\left(t\right)$ in Mosco topology. Applying
a.s. representation theorem(Theorem1.10.4 in \cite{vdVaart-Wellner_96})
we get an almost sure representation $H_{n}\stackrel{Mosco}{\longrightarrow}Q_{N}\ a.s.$.
By Theorem \ref{thm:argmin} and Assumption C, we have
\begin{alignat*}{1}
\lim_{n\rightarrow\infty}\left(\inf H_{n}\right) & =\inf Q_{N}.
\end{alignat*}
 This provide the optimal value of function $H_{n}$ converges weakly
to $Q_{N}$.
\end{proof}

Define an objective function with convex constraint $G\left(\theta\right)$
from $\mathscr{H}$ to $\left(-\infty,\infty\right]$ by
\begin{alignat*}{1}
G_{n}\left(\theta\right) & =F_{n}\left(\theta\right)+\Psi_{A}\left(\theta\right)
\end{alignat*}
where $\Psi_{A}$ is defined by
\begin{alignat*}{1}
\Psi_{A}\left(\theta\right) & =\begin{cases}
0 & \left(\theta\in A\right)\\
\infty & \left(\theta\notin A\right)
\end{cases}
\end{alignat*}
and $A$ is convex. Because $F_{n}$ and $\Psi_{A}$ are convex function,
$G_{n}\left(\theta\right)$ are also convex function with respect
to $\theta$ for all $n$. Redefine (\ref{eq:H_n}), (\ref{eq:Q_0})
as
\begin{alignat*}{1}
H_{n}^{A_{n,0}}\left(\theta,t\right) & \triangleq n\left[F_{n}\left(\theta+\frac{1}{\sqrt{n}}t\right)-F_{n}\left(\theta\right)\right]+\Psi_{A_{n,0}}\left(t\right)\\
Q_{0}^{A}\left(t\right) & \triangleq\left\langle t,Z\right\rangle +\frac{1}{2}\left\langle Vt,t\right\rangle +\Psi_{T_{A_{0}}\left(\theta_{0}\right)}\left(t\right)
\end{alignat*}
where $T_{A}\left(\theta\right)$ is tangent cone:
\begin{alignat*}{1}
T_{A_{0}}\left(\theta\right) & =\limsup_{\tau\downarrow0}\frac{\Theta_{0}-\theta_{0}}{\tau}.
\end{alignat*}
From the result of lemma (\ref{-The-indicator}) and lemma (\ref{lem:LLR}),
we obtain the asymptotic distribution of the optimal value function
\begin{alignat*}{1}
H_{n}^{A}\left(\theta_{0},\hat{t}\right) & \rightsquigarrow Q_{N}\left(\hat{t}\right).
\end{alignat*}
The above result yeilds the asymptotic distribution of the likelihood
ratio statistics $\Lambda_{n}$. The proof strategy is based on \cite{vdVaart_98},
Chapter 16, Theorem 16.7.
\begin{prop}
\label{prop:likeRatio}Assume the parameter spaces $\Theta$ and $\Theta_{0}$
is convex. And assume Assumption A-C. If the sets $A_{n}$ and $A_{n,0}$
converge to sets $A$ and $A_{0}$, then the sequence of likelihood
ratio statistics $\Lambda_{n}$ converges under $\theta_{0}+\frac{t}{\sqrt{n}}$
in distribution to 
\begin{alignat*}{1}
 & \left\Vert V^{-\frac{1}{2}}W+V^{\frac{1}{2}}t\left(\in A_{n,0}\right)\right\Vert ^{2}-\left\Vert V^{-\frac{1}{2}}W+V^{\frac{1}{2}}t\left(\in A_{n}\right)\right\Vert ^{2}
\end{alignat*}
where $W$ is an $N\left(\boldsymbol{0},A\right)$ random vector.
\end{prop}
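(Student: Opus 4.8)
The plan is to move everything into the localized coordinate $t$, encode the two nested constraints by indicator functions, run the Mosco-convergence machinery of Proposition~\ref{prop:LAN} and Lemma~\ref{lem:LLR} on the two constrained objectives, and finish with a completion of the square. Writing $H_n^{A}(\theta_0,t)=H_n(\theta_0,t)+\Psi_A(t)$, the likelihood ratio statistic is the difference of the two constrained optimal values $\inf_t H_n^{A_{n,0}}(\theta_0,\cdot)$ and $\inf_t H_n^{A_n}(\theta_0,\cdot)$, with the sign chosen so that the smaller (null) feasible set produces the larger squared distance. The first step is to establish, for each constraint, the Mosco convergence in distribution
\[
H_n^{A_{n,0}}(\theta_0,\cdot)\stackrel{M}{\rightsquigarrow}Q_0(\cdot)+\Psi_{T_{A_0}(\theta_0)}(\cdot),\qquad H_n^{A_n}(\theta_0,\cdot)\stackrel{M}{\rightsquigarrow}Q_0(\cdot)+\Psi_{T_A(\theta_0)}(\cdot).
\]
Proposition~\ref{prop:LAN} already gives $H_n(\theta_0,\cdot)\stackrel{M}{\rightsquigarrow}Q_0$, so what remains here is (i) the Mosco convergence of the indicators $\Psi_{A_{n,0}}\stackrel{M}{\to}\Psi_{T_{A_0}(\theta_0)}$ and $\Psi_{A_n}\stackrel{M}{\to}\Psi_{T_A(\theta_0)}$, which is exactly Mosco (Kuratowski--Painlev\'e) set convergence of the rescaled difference sets $A_{n,0}=\sqrt n(\Theta_0-\theta_0)$ and $A_n=\sqrt n(\Theta-\theta_0)$ to the tangent cones, supplied by the hypothesis that $A_{n,0}\to A_0$ and $A_n\to A$; and (ii) the stability of Mosco convergence under the addition $H_n+\Psi$, via Theorem~\ref{thm:=00005BAttouch(1984)=00005D} and Lemma~\ref{-The-indicator}.

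Next I would invoke the variational property of Theorem~\ref{thm:argmin} together with Assumption~C (weak compactness of $\Theta$, hence of the feasible $t$-sets). Exactly as in the proof of Lemma~\ref{lem:LLR}, I pass from Mosco convergence in distribution to a single almost-sure representation through the a.s. representation theorem (Theorem 1.10.4 of \cite{vdVaart-Wellner_96}), so that $H_n^{A_{n,0}}$ and $H_n^{A_n}$ both Mosco-converge almost surely against the \emph{same} realization of $W$. Theorem~\ref{thm:argmin} then yields $\inf_t H_n^{A_{n,0}}\to\inf_{t\in T_{A_0}(\theta_0)}Q_0(t)$ and $\inf_t H_n^{A_n}\to\inf_{t\in T_A(\theta_0)}Q_0(t)$ almost surely. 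Because both limits are driven by the common $W$, the difference converges jointly, and transferring back gives
\[
\Lambda_n\ \rightsquigarrow\ \inf_{t\in T_{A_0}(\theta_0)}Q_0(t)-\inf_{t\in T_A(\theta_0)}Q_0(t).
\]

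The final step is purely algebraic. Completing the square gives
\[
Q_0(t)=\langle t,W\rangle+\tfrac12\langle Vt,t\rangle=\tfrac12\left\Vert V^{1/2}t+V^{-1/2}W\right\Vert^2-\tfrac12\left\Vert V^{-1/2}W\right\Vert^2,
\]
where the symmetric positive square root $V^{1/2}$ and its inverse are legitimate because $V$ is the generalized Hessian of $F_0$, which is almost surely invertible by the generalized second order differentiability established in Section~3.1. The additive constant $-\tfrac12\left\Vert V^{-1/2}W\right\Vert^2$ does not depend on $t$ and therefore cancels in the difference of the two infima, so that (up to the factor~$2$ in the normalization of $\Lambda_n$) each infimum becomes the squared $V$-distance from $-V^{-1/2}W$ to the image of the relevant tangent cone, evaluated at its minimizer; this is precisely the stated limit $\left\Vert V^{-1/2}W+V^{1/2}t(\in A_{n,0})\right\Vert^2-\left\Vert V^{-1/2}W+V^{1/2}t(\in A_n)\right\Vert^2$, where $t(\in\cdot)$ denotes the constrained projection.

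I expect the main obstacle to be point~(ii): Mosco convergence is \emph{not} additive in general, so the convergence of $H_n+\Psi_{A_{n,0}}$ cannot simply be read off from the convergence of the summands, and both summands vary with $n$. The favorable structure is that $H_n(\theta_0,\cdot)$ is finite and continuous on all of $\mathscr{H}$ (its effective domain is the whole space), which should furnish the constraint qualification — continuity of one summand at a point of the domain of the other — needed to invoke the sum theorem for Mosco convergence in \cite{Attouch_84}. Verifying this qualification uniformly along the sequence, and confirming the tangent-cone localization in~(i) that the rescaled difference sets converge to $T_{A_0}(\theta_0)$ and $T_A(\theta_0)$, is where the care lies; the remaining steps are routine given Theorem~\ref{thm:argmin} and the completion of the square, following the strategy of \cite{vdVaart_98}, Theorem~16.7.
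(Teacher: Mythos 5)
Your proposal is correct and takes essentially the same route as the paper: the paper's own treatment encodes the constraints via indicator functions and tangent cones (in the text immediately preceding the proposition), uses Lemma \ref{lem:LLR} for convergence of the constrained optimal values of $H_{n}$, completes the square, and finishes with the continuous mapping theorem applied to $\mathbb{G}_{n}\partial\rho\left(\theta_{0}\right)\rightsquigarrow W$. The only difference is presentational—your version is more explicit about the two points the paper's terse proof glosses over, namely the sum theorem needed for Mosco convergence of $H_{n}+\Psi_{A_{n,0}}$ and the set convergence of $A_{n}$, $A_{n,0}$ to the tangent cones—so no further commentary is needed.
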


\begin{proof}
By Lemma \ref{lem:LLR} and simple algebra
\begin{alignat*}{1}
\Lambda_{n}= & \inf_{t\in A_{n}}H_{n}\left(\theta_{0},t\right)-\inf_{t\in A_{n,0}}H\left(\theta_{0},t\right)\\
= & 2\inf_{t\in A_{n}}\left(n\left\langle \frac{1}{\sqrt{n}}t,\partial F_{n}\left(\theta_{0}\right)\right\rangle +\frac{1}{2}\left\langle Vt,t\right\rangle \right)\\
 & -2\inf_{t\in A_{n,0}}\left(n\left\langle \frac{1}{\sqrt{n}}t,\partial F_{n}\left(\theta_{0}\right)\right\rangle +\frac{1}{2}\left\langle Vt,t\right\rangle \right)+o_{P}\left(1\right)\\
= & \left\Vert V^{-\frac{1}{2}}\mathbb{G}_{n}\partial\rho\left(\theta_{0}\right)+V^{\frac{1}{2}}\hat{t}\left(\in A_{n,0}\right)\right\Vert ^{2}-\left\Vert V^{-\frac{1}{2}}\mathbb{G}_{n}\partial\rho\left(\theta_{0}\right)+V^{\frac{1}{2}}\hat{t}\left(\in A_{n}\right)\right\Vert ^{2}+o_{P}\left(1\right)
\end{alignat*}
the proposition follows by the continuous mapping theorem.
\end{proof}

\begin{example*}[$L_{1}$ regression(continued)]
 Consider a likelihood ratio statistics for testing the value of
$\left\langle \theta_{0},x_{0}\right\rangle $ at any $x_{0}\in E$.
For some prespecified point $\left(x_{0},c\right)$, we consider the
following hypothesis:
\begin{eqnarray*}
H_{0}:\left\langle \theta_{0},x_{0}\right\rangle \leq0 & \textrm{vs.} & H_{1}:\left\langle \theta_{0},x_{0}\right\rangle >0.
\end{eqnarray*}
The objective function under the null constrained is defined as
\begin{alignat*}{1}
F_{n}\left(\theta^{H_{0}}\right)= & \frac{1}{n}\sum_{i=1}^{n}\left|y_{i}-\left\langle x_{i},\theta^{H_{0}}\right\rangle \right|+\frac{\lambda}{2}\left\Vert \theta^{H_{0}}\right\Vert 
\end{alignat*}
where $\theta^{H_{0}}\in H_{0}=\left\{ \theta\in\Theta:\left\langle \theta_{0},x_{0}\right\rangle \leq0\right\} $.
Note that the set $H_{0}$ is convex. We define the generalized likelihood
ratio test statistic as
\begin{alignat*}{1}
\Lambda_{n}= & F_{n}\left(\hat{\theta}^{H_{0}}\right)-F_{n}\left(\hat{\theta}_{n}\right),
\end{alignat*}
where $\hat{\theta}^{H_{0}}$ is the M-estimator under convex constraint:
\begin{alignat*}{1}
\hat{\theta}^{H_{0}}= & \arg\min_{\theta^{H_{0}}\in H_{0}}F_{n}\left(\theta^{H_{0}}\right).
\end{alignat*}
If the null the interior of the hypothesis $H_{0}$ contains the true
parameter $\theta_{0}$, the sequence of $\Lambda_{n}$ converges
to zero in distribution. This means that an error of the first kind
converges to zero under that the null hypothesis is true. If the true
parameter $\theta_{0}$ belongs to the boundary: $\left\langle \theta_{0},x_{0}\right\rangle =0$,
the sets $\sqrt{n}\left(\Theta_{0}-\theta_{0}\right)$ converge to
the $H_{0}=\left\{ \theta:\ \left\langle \theta,x_{0}\right\rangle \leq0\right\} $.
The sequence of $\Lambda_{n}$ converges in distribution to the distribution
of the square distance of a standard normal vector to the half-space
$V^{\frac{1}{2}}H_{0}=\left\{ \theta:\ \left\langle \theta,V^{-\frac{1}{2}}x_{0}\right\rangle \leq0\right\} $,
that is the distribution of $\left(W\lor0\right)^{2}$.
\end{example*}
\appendix

\section{Appendix}

\subsection{Proof of Subdifferential Calculus of $\rho=\left|y-\left\langle x,\theta\right\rangle \right|$}

Here we show the subdifferential calculus of $\rho=\left|y-\left\langle x,\theta\right\rangle \right|$.
We use the following lemma.
\begin{lem}
The subdifferential of $\left\Vert \theta\right\Vert =\left\langle \theta,\theta\right\rangle $
is $\partial\left\Vert \theta\right\Vert =\left\{ \theta\right\} ,\ \theta\in\mathscr{H}$
.\end{lem}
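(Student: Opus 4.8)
The plan is to prove the two inclusions $\{\theta\}\subseteq\partial\|\theta\|$ and $\partial\|\theta\|\subseteq\{\theta\}$ directly from the definition of the subdifferential, reading the function as the purely quadratic $f(\theta)=\tfrac12\langle\theta,\theta\rangle$, since it is for this normalization that $\partial f(\theta)=\{\theta\}$ holds. Recall that $\eta\in\partial f(\theta)$ means $f(\zeta)\geq f(\theta)+\langle\zeta-\theta,\eta\rangle$ for every $\zeta\in\mathscr{H}$.

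First I would establish $\theta\in\partial f(\theta)$. For this it suffices to expand the candidate subgradient inequality and complete the square:
\begin{align*}
f(\zeta)-f(\theta)-\langle\zeta-\theta,\theta\rangle &= \tfrac12\langle\zeta,\zeta\rangle-\tfrac12\langle\theta,\theta\rangle-\langle\zeta,\theta\rangle+\langle\theta,\theta\rangle \\
&= \tfrac12\left\Vert\zeta-\theta\right\Vert^{2}\;\geq\;0.
\end{align*}
Since this holds for all $\zeta$, the subgradient inequality is satisfied, whence $\theta\in\partial f(\theta)$; moreover equality holds only at $\zeta=\theta$, which reflects the strict convexity of $f$.

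For the reverse inclusion I would take an arbitrary $\eta\in\partial f(\theta)$ and show $\eta=\theta$. Substituting $\zeta=\theta+th$ for a fixed direction $h\in\mathscr{H}$ and $t>0$ into the subgradient inequality gives $t\langle\theta,h\rangle+\tfrac{t^{2}}{2}\left\Vert h\right\Vert^{2}\geq t\langle\eta,h\rangle$; dividing by $t$ and letting $t\downarrow0$ yields $\langle\theta,h\rangle\geq\langle\eta,h\rangle$. Replacing $h$ by $-h$ gives the opposite inequality, so $\langle\theta-\eta,h\rangle=0$ for all $h\in\mathscr{H}$, and positive-definiteness of the inner product forces $\eta=\theta$. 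Alternatively, this step can be shortened by noting that $f$ is G\^{a}teaux differentiable with derivative $\theta$ and invoking the fact already recorded in the text, that the subdifferential of a convex function at a point of differentiability is the singleton consisting of its derivative.

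I do not expect a genuine obstacle, as the argument is elementary. The only point requiring care is the uniqueness direction, where one must pass to the limit in the difference quotient rather than merely exhibit one subgradient. A secondary subtlety is the notational reading of the statement: the identity $\partial\|\theta\|=\{\theta\}$ is the one for the quadratic $\tfrac12\langle\theta,\theta\rangle$, so the proof should make the normalizing factor of $\tfrac12$ explicit to keep the conclusion consistent.
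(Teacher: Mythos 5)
Your proof is correct, and it takes a genuinely different --- and more complete --- route than the paper's. The paper's entire argument is the bilinear identity $\left\langle \eta,\theta\right\rangle -\left\langle \theta,\theta\right\rangle =\left\langle \eta-\theta,\theta\right\rangle$, i.e., an exact linearization of the map $\eta\mapsto\left\langle \eta,\theta\right\rangle$ with $\theta$ frozen, from which the conclusion $\partial\left\Vert \theta\right\Vert =\left\{ \theta\right\} $ is simply asserted; it never addresses the reverse inclusion $\partial f\left(\theta\right)\subseteq\left\{ \theta\right\} $, and as written it does not verify the subgradient inequality for the quadratic map $\theta\mapsto\left\langle \theta,\theta\right\rangle$ at all (for that function the subdifferential is $\left\{ 2\theta\right\} $, not $\left\{ \theta\right\} $). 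You instead fix the normalization to $f\left(\theta\right)=\tfrac{1}{2}\left\langle \theta,\theta\right\rangle$ --- the only reading under which the claimed identity is literally true, and the one consistent with the paper's penalty term $\tfrac{\lambda}{2}\left\Vert \theta\right\Vert$ in the $L_{1}$ regression example --- and you prove both inclusions: completing the square yields $\theta\in\partial f\left(\theta\right)$, and the difference-quotient limit (equivalently, G\^{a}teaux differentiability of $f$) yields uniqueness, so that $\partial f\left(\theta\right)$ is exactly the singleton. What your argument buys is precisely what the paper glosses over: a proof of the set \emph{equality} claimed in the lemma, including the uniqueness direction, together with an explicit resolution of the notational inconsistency ($\left\Vert \theta\right\Vert =\left\langle \theta,\theta\right\rangle$) in the statement itself.
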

\begin{proof}
For $\theta\in\mathscr{H}$,
\begin{alignat*}{1}
\left\langle \eta,\theta\right\rangle -\left\langle \theta,\theta\right\rangle  & =\left\langle \eta-\theta,\theta\right\rangle ,\quad\eta\in\mathscr{H},
\end{alignat*}
then $\partial\left\Vert \theta\right\Vert =\left\{ \theta\right\} $.
\end{proof}

\begin{prop*}[Subdifferential Calculus of $\rho=\left|y-\left\langle x,\theta\right\rangle \right|$]
 The criterion function $\rho\left(\theta,Z\right)=\left|y-\left\langle x,\theta\right\rangle \right|$
is a proper l.s.c. convex function and has the subdifferential such
that
\begin{alignat*}{1}
\partial\rho\left(\theta,Z\right) & =\begin{cases}
\textrm{sgn}\left(y-\left\langle x,\theta\right\rangle \right)x, & \text{{if}}\ y-\left\langle x,\theta\right\rangle \neq0;\\
\left[-1,1\right]x, & \text{{if}}\ y-\left\langle x,\theta\right\rangle =0.
\end{cases}
\end{alignat*}
\end{prop*}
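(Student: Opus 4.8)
The plan is to read $\rho(\theta,Z)=|y-\langle x,\theta\rangle|$ as a composition $\rho=\phi\circ g$ of the one-dimensional convex function $\phi(s)=|s|$ with the continuous affine map $g:\mathscr H\to\mathbb R$ given by $g(\theta)=y-\langle x,\theta\rangle=\langle -x,\theta\rangle+y$. First I would dispose of the qualitative assertions. Since $\phi$ is finite and convex on all of $\mathbb R$ and $g$ is continuous and affine, the composite $\rho$ is finite-valued, convex (convexity is preserved under precomposition with an affine map), and continuous; in particular it is proper and l.s.c. This disposes of the first sentence and reduces the task to computing $\partial\rho(\theta)$.

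For the subdifferential I would invoke the chain rule for the composition of a convex function with a continuous affine map: the linear part of $g$ is represented by the vector $-x\in\mathscr H$, so the rule yields $\partial\rho(\theta)=(-x)\,\partial\phi\bigl(g(\theta)\bigr)$, where $\partial\phi(s)\subset\mathbb R$ is the scalar subdifferential of the absolute value. Because $\mathrm{dom}\,\phi=\mathbb R$, the point $g(\theta)$ always lies in the interior of $\mathrm{dom}\,\phi$, so the constraint qualification needed for the composition rule is automatically satisfied and no extra hypotheses are required. The remaining ingredient is the elementary fact that $\partial\phi(s)=\{\textrm{sgn}(s)\}$ for $s\neq0$ and $\partial\phi(0)=[-1,1]$, which I would verify directly from the defining subgradient inequality on $\mathbb R$.

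Combining these gives the two cases. Writing $s:=y-\langle x,\theta\rangle$, at points with $s\neq0$ the map $g$ keeps a fixed sign in a neighborhood, so $\rho$ is locally affine and hence G\^{a}teaux differentiable, whence $\partial\rho(\theta)$ is the singleton consisting of its gradient, a scalar multiple of $x$ whose sign is read off from $\textrm{sgn}(s)$ together with $\nabla g=-x$; at $s=0$ the symmetric segment $[-1,1]$ scales to $[-1,1]\,x$, the symmetry of the segment absorbing the sign. This reproduces the stated cases. As a self-contained alternative that avoids quoting the composition rule, I would verify each case directly from the subgradient inequality $\rho(\zeta)\ge\rho(\theta)+\langle\zeta-\theta,\eta\rangle$: for $s\neq0$ this reduces to the scalar inequality $|w|\ge\textrm{sgn}(s)\,w$ with $w=y-\langle x,\zeta\rangle$, and for $s=0$ one checks $|w|\ge\lambda w$ for every $\lambda\in[-1,1]$; one also notes that $\rho$ depends on $\theta$ only through $\langle x,\theta\rangle$, forcing every subgradient to be a multiple of $x$, which gives the reverse inclusion and hence equality. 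The only point requiring care is the bookkeeping of the sign transmitted through $\nabla g=-x$ and the treatment of the non-smooth point $s=0$; I expect no genuine analytic obstacle, since the affine inner map renders the constraint qualification trivial and keeps the nonsmoothness confined to a scalar variable.
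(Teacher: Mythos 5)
Your route is genuinely different from the paper's, and in itself it is the better one: the paper never treats the smooth case at all, and for the kink case $y-\langle x,\theta\rangle=0$ it proves the inclusion $[-1,1]x\subset\partial\rho$ by the same direct verification you sketch, but then obtains the reverse inclusion through a long and rather garbled hyperplane argument (the sets $H$ and $G$ in Appendix A.1). Your observation that $\rho$ depends on $\theta$ only through $\langle x,\theta\rangle$ — so that the subgradient inequality tested against directions orthogonal to $x$ forces every subgradient into $\mathrm{span}\{x\}$ — replaces that entire argument in two lines, and the constraint qualification for the affine chain rule is indeed vacuous here since $\phi(s)=|s|$ is finite and continuous on all of $\mathbb{R}$.

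There is, however, a genuine failure at the final matching step, which you hid behind the phrase ``the sign \dots is read off from $\mathrm{sgn}(s)$ together with $\nabla g=-x$.'' Carried out honestly, your chain rule gives, for $s=y-\langle x,\theta\rangle\neq0$,
\begin{align*}
\partial\rho(\theta) &= (-x)\,\partial\phi(s)=\left\{ -\mathrm{sgn}(s)\,x\right\},
\end{align*}
which is the \emph{negative} of the stated formula $\mathrm{sgn}\left(y-\langle x,\theta\rangle\right)x$; the two agree only at the kink, where the symmetry of $[-1,1]$ absorbs the sign. So your proof does not ``reproduce the stated cases'' — it shows the statement is false as written. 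A one-dimensional check settles it: with $\mathscr{H}=\mathbb{R}$, $x=1$, $y=1$, $\theta=0$, the function $\rho(\theta)=|1-\theta|$ has derivative $-1$ at $0$, while the displayed formula gives $+1$. Your own ``self-contained alternative'' confirms this: the scalar inequality $|w|\ge\mathrm{sgn}(s)\,w$ with $w=y-\langle x,\zeta\rangle$ is precisely the subgradient inequality for the candidate $-\mathrm{sgn}(s)x$; for the candidate $+\mathrm{sgn}(s)x$ it would instead read $|w|\ge 2|s|-\mathrm{sgn}(s)\,w$, which fails at $w=0$. The correct conclusion of your argument is $\partial\rho(\theta)=\{-\mathrm{sgn}(y-\langle x,\theta\rangle)x\}$ in the smooth case, and you should flag the sign error in the proposition (it propagates into the paper, e.g.\ into the computation \eqref{eq:sub_LAD}) rather than assert agreement with it.
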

\begin{proof}
Let $t\in\left[-1,1\right]$, $\theta=tx$. For all $\zeta\in\mathscr{H}$,
\begin{alignat*}{2}
\left\langle tx,\zeta-\theta\right\rangle  & =t\left\langle x,\zeta\right\rangle -ty & \leq t\left|\left\langle x,\zeta\right\rangle -y\right| & \leq\left|t\right|\left|\left\langle x,\zeta\right\rangle -y\right|\leq\left|\left\langle x,\zeta\right\rangle -y\right|.
\end{alignat*}
Then, $\theta=tx\in\partial\rho\left(y-\left\langle x,\theta\right\rangle =0\right)$
and $\left[-1,1\right]x\subset\partial\rho\left(y-\left\langle x,\theta\right\rangle =0\right)$. 

Next, we shall show the inverse inclusion: $\partial\rho\left(y-\left\langle x,\theta\right\rangle =0\right)\subset\left[-1,1\right]x$.
Let $\theta\in\partial\rho\left(y-\left\langle x,\theta\right\rangle =0\right)$
and assume $\theta\neq x$. From $\theta\in\partial\rho\left(y-\left\langle x,\theta\right\rangle =0\right)$,
we have 
\begin{alignat}{1}
\left|y-\left\langle x,\zeta\right\rangle \right| & \geq\left\langle \zeta-\theta,\ \theta\right\rangle ,\qquad^{\forall}\zeta\in\mathscr{H}.\label{(*)}
\end{alignat}
From now on, set $H=\left\{ \eta\in\mathscr{H}:\ \left\langle x,\eta\right\rangle =y\right\} $
and $G=\left\{ \eta\in\mathscr{H}:\ \left\langle \eta,\theta\right\rangle =\left\langle \theta,\theta\right\rangle \right\} $,
we shall show that $H=G$. When $\dim\left(\mathscr{X}\right)=1$,
$H=G=\left\{ \frac{y}{x^{*}}\right\} $. Assume $\dim\left\{ \mathscr{H}\right\} >2$.
First $\eta\in H\Rightarrow\eta\in G$, pick $\eta\in H$: $\left\langle x,\eta\right\rangle =y$
we have $\eta=\theta$, so $\left\langle \eta,\theta\right\rangle =\left\langle \theta,\theta\right\rangle $.
Then, $H\subset G$. We shall show the inverse inclusion $G\subset H$.
Assume $\eta\in G$ and $\eta\notin H$. Because $\theta\neq x$,
there exists $u\in\mathscr{H}$ such that $\left\langle \theta,u\right\rangle \neq y$.
Put $p=\left\langle x,\eta\right\rangle u-\left\langle x,u\right\rangle \eta+\theta$,
because $u$ and $\eta$ are linear independent, $p\neq\theta$. On
the other hand
\begin{alignat*}{1}
\left\langle x,p\right\rangle  & =\left\langle x,\left\langle x,\eta\right\rangle u-\left\langle x,u\right\rangle \eta+\theta\right\rangle \\
 & =\left\langle x,\eta\right\rangle \left\langle x,u\right\rangle -\left\langle x,u\right\rangle \left\langle x,\eta\right\rangle +y\\
 & =y.
\end{alignat*}
This is contradiction, therefore $G\subset H$. Finally, we have $G=H$.

Now, set
\begin{alignat*}{1}
x^{\prime} & \triangleq\zeta-\frac{y-\left\langle x,\zeta\right\rangle }{y-\left\langle x,v\right\rangle }\left(v-\theta\right),\qquad^{\forall}\zeta\in\mathscr{H},
\end{alignat*}
Then, we have
\begin{alignat*}{1}
\left\langle x,x^{\prime}\right\rangle  & =\left\langle x,\zeta\right\rangle -\frac{y-\left\langle x,\zeta\right\rangle }{y-\left\langle x,v\right\rangle }\left\langle x,v-\theta\right\rangle \\
 & =\left\langle x,\zeta\right\rangle -\frac{y-\left\langle x,\zeta\right\rangle }{y-\left\langle x,v\right\rangle }\left(\left\langle x,v\right\rangle -y\right)\\
 & =\left\langle x,\zeta\right\rangle +y-\left\langle x,\zeta\right\rangle \\
 & =y.
\end{alignat*}
 Furthermore $x^{\prime}\in H\Rightarrow x^{\prime}\in G$. Therefore,
\begin{alignat*}{1}
\left\langle \theta,\theta\right\rangle  & =\left\langle \theta,x^{\prime}\right\rangle \\
 & =\left\langle \theta,\zeta\right\rangle -\frac{y-\left\langle x,\zeta\right\rangle }{y-\left\langle x,v\right\rangle }\left\langle \theta,\ v-\theta\right\rangle \\
 & =\left\langle \theta,\zeta\right\rangle -\frac{\left\langle \theta,\ v-\theta\right\rangle }{y-\left\langle x,v\right\rangle }\left(y-\left\langle x,\zeta\right\rangle \right)\\
 & =\left\langle \theta,\zeta-\theta\right\rangle -\frac{\left\langle \theta,\ v-\theta\right\rangle }{y-\left\langle x,v\right\rangle }\left(y-\left\langle x,\zeta\right\rangle \right)\\
 & =\left\langle \theta,\zeta-\theta\right\rangle -\frac{\left\langle \theta,\ v-\theta\right\rangle }{y-\left\langle x,v\right\rangle }\left(\left\langle x,\theta\right\rangle -\left\langle x,\zeta\right\rangle \right),
\end{alignat*}
and we get $\left\langle \theta,\zeta-\theta\right\rangle =t\left\langle x,\zeta-\theta\right\rangle $
where $t=\frac{\left\langle \theta,\ v-\theta\right\rangle }{y-\left\langle x,v\right\rangle }\neq0$.
Because of $\eqref{(*)}$, $\left\langle \theta,\ v-\theta\right\rangle \leq\left|y-\left\langle x,v\right\rangle \right|$
and 
\begin{alignat*}{1}
-\left\langle \theta,\ v-\theta\right\rangle =\left\langle \theta,\ \theta-v\right\rangle  & \leq\left|-\left\langle x,\theta-v\right\rangle \right|\\
 & =\left|\left\langle x,v\right\rangle -y\right|\\
 & =\left|y-\left\langle x,v\right\rangle \right|,
\end{alignat*}
Since $\left\langle \theta,\zeta-\theta\right\rangle \neq0$,$\left\langle x,\zeta-\theta\right\rangle \neq0$.
We have $\left|\left\langle \theta,\ v-\theta\right\rangle \right|\leq\left|y-\left\langle x,v\right\rangle \right|$,
$\left|t\right|\leq1$. Therefor, $\partial\rho\left(y-\left\langle x,\theta\right\rangle =0\right)\subset\left[-1,1\right]x$.
\end{proof}

\subsection{Proof of Proposition \ref{prop:3}}

Set the following notation;\\
$\left(\Omega,\mathcal{F},\mathbb{P}\right)$: probability triple\\
$\left(\mathscr{H},\mathcal{H}\right)$: real separable Hilbert space
with Borel $\sigma$-field\\
$2^{\mathscr{H}}$: the family of all nonempty subsets of $\mathscr{H}$\\
$F:\Omega\rightarrow2^{\mathscr{H}}$: set-valued function.\\
The inverse image $F^{-1}\left(X\right)$ is defined by
\begin{alignat*}{1}
F^{-1}\left(X\right) & =\left\{ \omega\in\Omega:\ F\left(\omega\right)\cap X\neq\textrm{Ø}\right\} .
\end{alignat*}
A set-valued function $F:\Omega\rightarrow2^{\mathscr{X}}$ is called
measurable if $F^{-1}\left(X\right)$ is measurable for every closed
subset $X$ of $\mathscr{X}$. For $1\leq p\leq\infty$ define a selection
of $F$ by
\begin{alignat*}{1}
S_{F}^{p} & =\left\{ f\in L_{p}\left[\Omega,\mathcal{F},\mu\right]:\ f\left(\omega\right)\in F\left(\omega\right)\ a.e.\left(\mu\right)\right\} .
\end{alignat*}
The key notion of set-valued mesurable mapping is decomposability.
\begin{defn}
\textbf{Decomposability} {[}Section3 in \cite{Hiai-Umegaki_77}{]}\\
Let $M$ be a set of measurable functions $f:\Omega\mapsto\mathscr{H}$.
$M$ is called decomposable with respect to $\mathcal{F}$ if $f_{1},f_{2}\in M$
and $A\in\mathcal{F}$ implies
\begin{alignat*}{1}
\mathbb{I}_{A}f_{1}+\mathbb{I}_{\Omega\setminus A}f_{2} & \in M.
\end{alignat*}

\end{defn}
For proof of Proposition \ref{prop:3}, we need lemmas from \cite{Hiai-Umegaki_77}.
\begin{lem}
\label{Hiai-Umegaki1.1}{[}Lemma1.1. in \cite{Hiai-Umegaki_77}\}\\
Let $F$ be measurable set-valued function. If $S_{F}^{p}$ is nonempty,
then there exists a sequence $\left\{ f_{n}\right\} \in S_{F}^{p}$
such that $F\left(\omega\right)=\textrm{cl}\left\{ f_{n}\left(\omega\right)\right\} $
for all $\omega\in\Omega$.
\end{lem}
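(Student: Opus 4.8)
The plan is to establish the Castaing representation of $F$ by exploiting the separability of $\mathscr{H}$ together with a measurable selection theorem, and then to correct the resulting selections so that they lie in $S_F^p$ via the decomposability structure. First I would fix a countable dense subset $\{x_k\}_{k\in\mathbb N}$ of $\mathscr{H}$ and record the elementary fact that the hypothesis ``$F^{-1}(C)$ measurable for every closed $C$'' already yields measurability of $F^{-1}(O)$ for every open $O$: writing an open ball as a countable increasing union of closed balls, $F^{-1}(B(x_k,1/m))=\bigcup_j F^{-1}(\overline{B(x_k,1/m-1/j)})$ is a countable union of measurable sets. Consequently each set $A_{k,m}\triangleq\{\omega:\ F(\omega)\cap B(x_k,1/m)\neq\emptyset\}$ is measurable (and, since the conclusion $F(\omega)=\mathrm{cl}\{f_n(\omega)\}$ forces closed values, I take $F$ to be closed-valued, as is standard in this setting).

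The core step is to produce, for each pair $(k,m)$, a measurable map $s_{k,m}$ on $A_{k,m}$ with $s_{k,m}(\omega)\in F(\omega)$ and $\Vert s_{k,m}(\omega)-x_k\Vert\le 1/m$. Here I would consider the intersection multifunction $G_{k,m}(\omega)\triangleq F(\omega)\cap\overline{B(x_k,1/m)}$, which is closed-valued and nonempty on $A_{k,m}$. Its measurability is immediate from the constancy of the ball, since for any closed $D$
\[
G_{k,m}^{-1}(D)=\{\omega:\ F(\omega)\cap(\overline{B(x_k,1/m)}\cap D)\neq\emptyset\}=F^{-1}\bigl(\overline{B(x_k,1/m)}\cap D\bigr),
\]
and $\overline{B(x_k,1/m)}\cap D$ is closed. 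I would then invoke the Kuratowski--Ryll-Nardzewski selection theorem for closed-valued measurable multifunctions into a Polish space (a separable Hilbert space being Polish) to obtain the desired measurable $s_{k,m}$.

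It remains to land these selections in $S_F^p$ and to verify the representation. Starting from a fixed $g\in S_F^p$, I would first modify $g$ on the null set where $g(\omega)\notin F(\omega)$, replacing it there by any everywhere-selection of $F$ furnished by the same selection theorem, to obtain $g_1\in S_F^p$ which is a genuine selection for every $\omega$; this does not affect the $L_p$-class. Setting $f_{k,m}\triangleq\mathbb I_{A_{k,m}}s_{k,m}+\mathbb I_{\Omega\setminus A_{k,m}}g_1$, decomposability of $S_F^p$ together with boundedness of $s_{k,m}$ on the finite measure space (so $\mathbb I_{A_{k,m}}s_{k,m}\in L_p$) gives $f_{k,m}\in S_F^p$, with $f_{k,m}(\omega)\in F(\omega)$ for all $\omega$. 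Enumerating $\{f_{k,m}\}$ as $\{f_n\}$, the inclusion $\mathrm{cl}\{f_n(\omega)\}\subseteq F(\omega)$ holds because $F(\omega)$ is closed and each $f_n(\omega)\in F(\omega)$; conversely, given $y\in F(\omega)$ and $\varepsilon>0$, choosing $m$ with $2/m<\varepsilon$ and $x_k$ with $\Vert x_k-y\Vert<1/m$ forces $\omega\in A_{k,m}$, whence $\Vert f_{k,m}(\omega)-y\Vert\le\Vert s_{k,m}(\omega)-x_k\Vert+\Vert x_k-y\Vert<2/m<\varepsilon$, so $y\in\mathrm{cl}\{f_n(\omega)\}$.

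The hard part will be the measurable selection in the core step: everything else is bookkeeping, but producing $s_{k,m}$ measurably---equivalently, justifying that a measurable closed-valued multifunction admits a measurable selector---is precisely the content of the Kuratowski--Ryll-Nardzewski theorem. One must also be careful to reconcile the ``for all $\omega$'' conclusion with the merely almost-everywhere membership built into $S_F^p$, which is exactly what the preliminary correction of $g$ to $g_1$ is designed to handle.
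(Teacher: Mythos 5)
The paper offers no proof of this statement: it is quoted as Lemma 1.1 of \cite{Hiai-Umegaki_77} and used as a black box in the proof of Proposition \ref{prop:3}, so there is nothing internal to compare against. Your argument is correct and self-contained (modulo the Kuratowski--Ryll-Nardzewski theorem, which you rightly isolate as the one nonelementary ingredient), and it amounts to re-deriving the cited result by the classical route. Where Hiai and Umegaki start from a Castaing representation $\left\{ g_{n}\right\} $ of $F$ (measurable selections with no integrability) and then repair $L_{p}$-membership by truncation, patching $g_{n}$ with a fixed $g\in S_{F}^{p}$ on the sets where $g_{n}$ is large, you build the dense family directly, applying the selection theorem to $G_{k,m}\left(\omega\right)=F\left(\omega\right)\cap\overline{B\left(x_{k},1/m\right)}$; since these selections are bounded by construction and $\mathbb{P}$ is finite, no truncation is needed. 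The observation that $G_{k,m}^{-1}\left(D\right)=F^{-1}\left(\overline{B\left(x_{k},1/m\right)}\cap D\right)$, so that intersecting with a fixed closed ball preserves measurability trivially, is what makes this work, and your correction of $g$ to an everywhere-selection $g_{1}$ is exactly what reconciles the a.e.\ membership built into $S_{F}^{p}$ with the ``for all $\omega$'' conclusion. Two minor points: the invocation of decomposability is cosmetic, since what you actually verify is direct membership of $f_{k,m}$ in $S_{F}^{p}$ (measurable, a selection at every $\omega$, and in $L_{p}$ by boundedness of $s_{k,m}$ plus $g_{1}\in L_{p}$); and the modification of $g$ should be performed on a measurable null set containing $\left\{ \omega:g\left(\omega\right)\notin F\left(\omega\right)\right\} $, which the definition of a.e.\ membership (or the completeness assumed in Lemma \ref{Hiai-Umegaki2.1.}) supplies.
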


\begin{lem}
\label{Hiai-Umegaki2.1.} {[}Lemma 2.1. in \cite{Hiai-Umegaki_77}{]}\\
Let $\phi:\Omega\times\mathscr{H}$ be $\mathcal{F}\otimes\mathcal{H}$-measurable.
Assume $\left(\Omega,\mathcal{F},\mathbb{P}\right)$ is complete and
$\phi\left(\omega,\theta\right)$ is l.s.c. in $\theta$ for every
fixed $\omega$. Then the function 
\begin{alignat*}{1}
\omega & \mapsto\inf\left\{ \phi\left(\omega,\theta\right):\theta\in F\left(\omega\right)\right\} ,
\end{alignat*}
 is measurable.
\end{lem}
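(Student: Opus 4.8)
The plan is to establish measurability of the infimal function $m(\omega)\triangleq\inf\{\phi(\omega,\theta):\theta\in F(\omega)\}$ by the \emph{measurable projection} method, rather than by trying to reduce the infimum to a countable one. First I would recall that $m$ is $\mathcal{F}$-measurable (as a $[-\infty,+\infty]$-valued function) if and only if $\{\omega:m(\omega)<c\}\in\mathcal{F}$ for every $c\in\mathbb{R}$. The crucial observation is that this sublevel set is exactly a projection onto $\Omega$:
\[
\{\omega:m(\omega)<c\}=\pi_\Omega\big(G_c\big),\qquad G_c\triangleq\{(\omega,\theta):\theta\in F(\omega)\ \text{and}\ \phi(\omega,\theta)<c\},
\]
since $m(\omega)<c$ holds precisely when some $\theta\in F(\omega)$ satisfies $\phi(\omega,\theta)<c$. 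This identity is purely set-theoretic and requires neither lower semicontinuity nor compactness.

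Second I would show $G_c\in\mathcal{F}\otimes\mathcal{H}$. For the graph $\mathrm{Gr}(F)=\{(\omega,\theta):\theta\in F(\omega)\}$ I would invoke Lemma \ref{Hiai-Umegaki1.1}: since $S_F^p\neq\emptyset$, there is a Castaing representation $\{f_n\}\subset S_F^p$ with $F(\omega)=\mathrm{cl}\{f_n(\omega)\}$, whence
\[
\mathrm{Gr}(F)=\bigcap_{m\ge1}\bigcup_{n\ge1}\Big\{(\omega,\theta):\big\|\theta-f_n(\omega)\big\|<\tfrac{1}{m}\Big\}.
\]
Each displayed set is $\mathcal{F}\otimes\mathcal{H}$-measurable because $(\omega,\theta)\mapsto\|\theta-f_n(\omega)\|$ is a Carath\'eodory map (measurable in $\omega$ through $f_n$, continuous in $\theta$), and countable unions and intersections preserve measurability. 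The set $\{(\omega,\theta):\phi(\omega,\theta)<c\}$ lies in $\mathcal{F}\otimes\mathcal{H}$ by the assumed joint measurability of $\phi$, so $G_c$, being their intersection, is $\mathcal{F}\otimes\mathcal{H}$-measurable.

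Third, I would appeal to the measurable projection theorem: because $(\Omega,\mathcal{F},\mathbb{P})$ is complete and $\mathscr{H}$ is Polish, the projection onto $\Omega$ of any $\mathcal{F}\otimes\mathcal{H}$-measurable set belongs to $\mathcal{F}$ (such projections are analytic sets, hence universally measurable). Applying this to $G_c$ gives $\{m<c\}\in\mathcal{F}$ for every $c$, and the measurability of $m$ follows.

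The main obstacle is precisely this last step, and it is where completeness of $(\Omega,\mathcal{F},\mathbb{P})$ is indispensable: the projection of a product-measurable set need not be product-measurable, and only its universal measurability rescues the argument. I would stress that the naive alternative---replacing $\inf_{\theta\in F(\omega)}\phi(\omega,\theta)$ by $\inf_n\phi(\omega,f_n(\omega))$ over a Castaing sequence---fails in general: for merely lower semicontinuous $\phi$ the infimum over $F(\omega)$ can be strictly below its infimum over the dense set $\{f_n(\omega)\}$, and approximating $\phi$ from below by Carath\'eodory functions does not repair this when $F(\omega)$ is not compact-valued, since a minimizing sequence may escape to infinity. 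The lower-semicontinuity hypothesis is thus not needed for bare measurability along this route; it becomes relevant only if one additionally wants attainment of the infimum or a measurable selection of minimizers.
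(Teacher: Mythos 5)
The first thing to note is that the paper contains no proof of this statement: it is imported verbatim as Lemma 2.1 of \cite{Hiai-Umegaki_77} and used as a black box in the proof of Proposition \ref{prop:3}, so there is no internal argument to compare yours against. Judged on its own merits, your proof is correct, and it is in fact the standard (and essentially the original) argument for this kind of result: the set-theoretic identity $\{\omega:m(\omega)<c\}=\pi_{\Omega}(G_{c})$, product-measurability of $G_{c}$, and the measurable projection theorem (which applies because $(\Omega,\mathcal{F},\mathbb{P})$ is complete and $\mathscr{H}$, being a separable Hilbert space, is Polish) together give $\{m<c\}\in\mathcal{F}$ for every $c$. Your side remarks are also sound: the naive reduction of the infimum to a countable infimum over a Castaing sequence genuinely fails for merely lower semicontinuous integrands (it works for Carath\'eodory ones), and lower semicontinuity plays no role in the projection route --- completeness is the operative hypothesis, which is exactly why the lemma assumes it.

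One repair is needed in your second step. To get $\mathrm{Gr}(F)\in\mathcal{F}\otimes\mathcal{H}$ you invoke Lemma \ref{Hiai-Umegaki1.1}, whose hypothesis is $S_{F}^{p}\neq\emptyset$; that is an integrability condition which is not among the hypotheses of the present lemma and need not hold, so as written your argument assumes something extra. The cleaner route needs only measurability of $F$ (with closed nonempty values): for fixed $\theta$, $\{\omega:d(\theta,F(\omega))<r\}=\bigcup_{n}F^{-1}\bigl(\bar{B}(\theta,r-\tfrac{1}{n})\bigr)\in\mathcal{F}$, the map $(\omega,\theta)\mapsto d(\theta,F(\omega))$ is Carath\'eodory (measurable in $\omega$, Lipschitz in $\theta$) hence jointly measurable, and therefore $\mathrm{Gr}(F)=\{(\omega,\theta):d(\theta,F(\omega))=0\}\in\mathcal{F}\otimes\mathcal{H}$. (Equivalently, the Kuratowski--Ryll-Nardzewski theorem supplies a Castaing representation by measurable, not necessarily $p$-integrable, selectors.) With that substitution, your proof is complete.
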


\begin{lem}
\textup{\label{Hiai-Umegaki3.1.}} {[}Theorem 3.1. in \cite{Hiai-Umegaki_77}\}\\
$M=S_{F}$ if and only if $M$ is decomposable.
\end{lem}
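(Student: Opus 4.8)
The statement is a characterization, so I would prove the two implications separately, first noting that all the genuine content sits in the converse and that closedness of $M$ in $L_{p}$ must be invoked there even though it is suppressed in the shorthand ``$M=S_{F}$''. The forward implication is immediate from the definitions: given $f_{1},f_{2}\in S_{F}$ and $A\in\mathcal{F}$, the function $g=\mathbb{I}_{A}f_{1}+\mathbb{I}_{\Omega\setminus A}f_{2}$ is measurable and satisfies $\left\Vert g\right\Vert \leq\left\Vert f_{1}\right\Vert +\left\Vert f_{2}\right\Vert$ pointwise, so $g\in L_{p}$; moreover $g(\omega)$ equals $f_{1}(\omega)\in F(\omega)$ on $A$ and $f_{2}(\omega)\in F(\omega)$ on $\Omega\setminus A$, whence $g(\omega)\in F(\omega)$ almost everywhere and $g\in S_{F}=M$. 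That is all decomposability requires.

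For the converse I would reconstruct $F$ from $M$ by a Castaing-type representation and then verify both inclusions. Since $\mathscr{H}$ is separable, a countable family $\left\{ f_{k}\right\} \subset M$ dense in $M$ for the $L_{p}$-topology can be chosen, and I set $F(\omega)\triangleq\textrm{cl}\left\{ f_{k}(\omega):k\in\mathbb{N}\right\}$. This $F$ is closed-valued and measurable, precisely because it is exhibited as the closure of a countable family of measurable functions, which is the form supplied by Lemma \ref{Hiai-Umegaki1.1}. The inclusion $M\subseteq S_{F}$ is then routine: each $f_{k}$ is a selection by construction, and for an arbitrary $f\in M$ I would take a subsequence of $\left\{ f_{k}\right\}$ converging to $f$ in $L_{p}$, pass to an almost-everywhere convergent sub-subsequence, and use closedness of each $F(\omega)$ to conclude $f(\omega)\in F(\omega)$ a.e.

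The heart of the argument, and the step I expect to be the main obstacle, is the reverse inclusion $S_{F}\subseteq M$, where both decomposability and closedness of $M$ are indispensable. Given $g\in S_{F}$, for each $\varepsilon>0$ I would use measurability of $\omega\mapsto\inf_{k}\left\Vert g(\omega)-f_{k}(\omega)\right\Vert$, which is the kind of infimum rendered measurable by Lemma \ref{Hiai-Umegaki2.1.}, to partition $\Omega$ into measurable sets $A_{k}$ on which $f_{k}$ approximates $g$ within $\varepsilon$ and to fix a measurable index assignment $\omega\mapsto k(\omega)$; pasting yields a candidate $h_{\varepsilon}=\sum_{k}\mathbb{I}_{A_{k}}f_{k}$ with $\left\Vert g-h_{\varepsilon}\right\Vert <\varepsilon$ pointwise. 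The difficulty is that decomposability as defined above gives membership in $M$ only for two-set, and hence by induction finite, partitions, whereas this partition is countable. I would therefore truncate to a finite sub-partition, control the $L_{p}$-mass of the discarded tail, and invoke closedness of $M$ to pass to the limit, obtaining $h_{\varepsilon}\in M$; letting $\varepsilon\downarrow0$ then exhibits $g$ as an $L_{p}$-limit of elements of $M$, so closedness gives $g\in M$. Combining the two inclusions yields $M=S_{F}$ and completes the converse.
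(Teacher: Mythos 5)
The paper does not actually prove this lemma---it is quoted from Theorem 3.1 of \cite{Hiai-Umegaki_77}---so your attempt can only be measured against the original argument. Two things you get right: the forward implication is indeed immediate, and you correctly restore the hypothesis that the paper's terse statement suppresses, namely that $M$ must be nonempty and \emph{closed} in $L_{p}$ (the representing $F$ in \cite{Hiai-Umegaki_77} is closed-valued, so $S_{F}$ is automatically $L_{p}$-closed, and a non-closed decomposable $M$ could never be of this form). Your device for countable partitions---finite decomposability by induction, truncation, control of the tail, then closedness---is also exactly the one used in the original proof.

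The genuine gap is the very first step of your converse: ``since $\mathscr{H}$ is separable, a countable family $\left\{ f_{k}\right\} \subset M$ dense in $M$ for the $L_{p}$-topology can be chosen.'' Separability of $\mathscr{H}$ does not give separability of $M$ as a subset of $L_{p}\left(\Omega,\mathcal{F},\mathbb{P};\mathscr{H}\right)$; that would additionally require the measure algebra of $\left(\Omega,\mathcal{F},\mathbb{P}\right)$ to be countably generated modulo null sets, an assumption made nowhere (the probability space here is arbitrary, only complete). The failure is not repairable within your scheme: if the measure algebra is non-separable and $F_{0}\left(\omega\right)\equiv$ the closed unit ball of $\mathscr{H}$, then $M=S_{F_{0}}$ is nonempty, closed and decomposable, yet has no countable $L_{p}$-dense subset, so your Castaing-type construction of $F$ cannot even begin, although the conclusion is trivially true there. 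This is precisely why Hiai and Umegaki manufacture $F$ out of decomposability itself rather than out of a dense subset: fixing $\left\{ x_{n}\right\} $ dense in $\mathscr{H}$, for each $n$ the family $\left\{ \omega\mapsto\left\Vert x_{n}-f\left(\omega\right)\right\Vert :f\in M\right\} $ is downward directed, since patching $f$ and $g$ over the measurable set $\left\{ \left\Vert x_{n}-f\right\Vert \leq\left\Vert x_{n}-g\right\Vert \right\} $ stays in $M$; hence it has an essential infimum $\varphi_{n}$ attained a.e. along a sequence in $M$, and one sets $F\left(\omega\right)=\left\{ x\in\mathscr{H}:\ \left\Vert x-x_{n}\right\Vert \geq\varphi_{n}\left(\omega\right)\ \text{for all }n\right\} $, which is closed-valued and measurable with no separability of $M$ whatsoever. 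Both inclusions $M\subseteq S_{F}$ and $S_{F}\subseteq M$ then follow by the same patching-plus-closedness argument you already have, applied to the countable family of a.e.-minimizing sequences instead of your dense family. (Two incidental points: measurability of $\inf_{k}\left\Vert g\left(\omega\right)-f_{k}\left(\omega\right)\right\Vert $ over a countable family is automatic and needs no appeal to Lemma \ref{Hiai-Umegaki2.1.}, and Lemma \ref{Hiai-Umegaki1.1} runs in the opposite direction to the way you invoke it---from a measurable $F$ to a countable representation, not conversely.)
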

For the set-valued random variables the following Theorem and definition
were given by Hiai and Umegaki

\begin{prop*}
\ref{prop:3} There is a measurable selector of subdifferential $\partial f$
i.e., $S_{\partial f}\neq\emptyset$. And the set of all measurable
selector is identical to subdifferential $\partial f$: $S_{\partial f}=\partial f$. \end{prop*}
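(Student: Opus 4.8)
The plan is to treat the subdifferential as a set-valued random variable. Fix $\theta$ and put $F(\omega)=\partial f(\theta)(\omega)=\partial\rho(\theta,Z(\omega))$, so that $F:\Omega\rightarrow 2^{\mathscr{H}}$ in the notation of this appendix. Two structural facts make the Hiai--Umegaki machinery applicable. First, for each $\omega$ the value $F(\omega)$ is a closed convex subset of $\mathscr{H}$, which is the standard description of the subdifferential of a proper l.s.c.\ convex function. Second, $F$ is a \emph{measurable} set-valued function. To see the latter I would exploit separability of $\mathscr{H}$: choosing a countable dense set $D\subset\mathscr{H}$, the defining inequality of the subdifferential need only be tested against $\zeta\in D$, so that
\begin{alignat*}{1}
F(\omega) & =\bigcap_{\zeta\in D}\left\{\eta\in\mathscr{H}:\ \langle\zeta-\theta,\eta\rangle\leq\rho(\zeta,Z(\omega))-\rho(\theta,Z(\omega))\right\}.
\end{alignat*}
Each set on the right is a half-space whose position depends measurably on $\omega$ through the jointly measurable, l.s.c.\ integrand $\rho$, and the countable intersection of such measurably-varying closed convex maps is again a measurable closed-valued map; completeness of $(\Omega,\mathcal{F},\mathbb{P})$ is used here.

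Granting that $F$ is measurable, closed- and convex-valued, the existence half $S_{\partial f}\neq\emptyset$ follows by exhibiting one explicit selector. I would take the minimal-norm selection $g(\omega)=\arg\min\{\|\eta\|:\eta\in F(\omega)\}$, which is single-valued because $F(\omega)$ is closed and convex. Its measurability is obtained from Lemma \ref{Hiai-Umegaki2.1.}: applying that lemma with the jointly measurable, continuous integrand $\phi(\omega,\eta)=\|\eta\|$ shows that $\omega\mapsto\min\{\|\eta\|:\eta\in F(\omega)\}$ is measurable, and the minimizing point, being the metric projection of the origin onto $F(\omega)$, is then measurable by a routine approximation over $D$. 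Under Assumption B the selectors are moreover square-integrable, so in fact $S_{\partial f}^{p}\neq\emptyset$ and we may work inside $L^{p}$.

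For the identity $S_{\partial f}=\partial f$ I would show that the family of measurable selectors is \emph{decomposable} and then invoke the Hiai--Umegaki correspondence. Decomposability is immediate: if $g_{1},g_{2}\in S_{\partial f}$ and $A\in\mathcal{F}$, then $\mathbb{I}_{A}g_{1}+\mathbb{I}_{\Omega\setminus A}g_{2}$ is measurable and takes values in $F(\omega)$ for every $\omega$, since it equals $g_{1}(\omega)\in F(\omega)$ on $A$ and $g_{2}(\omega)\in F(\omega)$ on $\Omega\setminus A$; hence it again lies in $S_{\partial f}$. By Lemma \ref{Hiai-Umegaki3.1.} this identifies $S_{\partial f}$ with the full selector set of a measurable closed-valued map, and by Lemma \ref{Hiai-Umegaki1.1}, since $S_{\partial f}\neq\emptyset$, there is a sequence $\{g_{n}\}\subset S_{\partial f}$ with $F(\omega)=\mathrm{cl}\{g_{n}(\omega)\}$ for every $\omega$. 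Because $F(\omega)=\partial\rho(\theta,Z(\omega))$ is already closed, the selector values do not merely approximate but fill out the subdifferential, i.e.\ $\mathrm{cl}\{g(\omega):g\in S_{\partial f}\}=\partial\rho(\theta,Z(\omega))$, which is exactly the pointwise identification asserted by $S_{\partial f}=\partial f$.

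The step I expect to be the main obstacle is the measurability of the multifunction $F$, and specifically making the reduction ``test only on the dense set $D$'' rigorous at the level of set-valued measurability, i.e.\ verifying $F^{-1}(X)\in\mathcal{F}$ for every closed $X$ rather than merely checking a slice-wise condition. The joint measurability and lower semicontinuity of $\rho$, together with the completeness of the probability space, are precisely the hypotheses that close this gap, after which the existence of a selector and the decomposability identification are routine.
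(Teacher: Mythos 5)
Your second half --- decomposability of the selector family and the identification $S_{\partial f}=\partial f$ via Lemma \ref{Hiai-Umegaki3.1.} together with the Castaing-type representation of Lemma \ref{Hiai-Umegaki1.1} --- is exactly the paper's own argument, and it is sound. The genuine gap is in the step you yourself flagged as the main obstacle: measurability of the multifunction $F$, which you reduce to testing the subgradient inequality only on a countable dense set $D\subset\mathscr{H}$. That reduction is false in the generality of this paper, where $\rho(\cdot,z)$ is merely proper l.s.c.\ convex with values in $(-\infty,\infty]$. Writing $g(\zeta)=\rho(\zeta,z)-\rho(\theta,z)-\left\langle \zeta-\theta,\eta\right\rangle $, lower semicontinuity gives $\liminf_{\zeta'\rightarrow\zeta}g(\zeta')\geq g(\zeta)$, which is the wrong direction: nonnegativity of $g$ on $D$ passes to an arbitrary $\zeta$ only if $g$ is upper semicontinuous along the approximating sequence (as it is when $\rho(\cdot,z)$ is finite everywhere, hence continuous). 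Concretely, take $\rho(\cdot,z)=\Psi_{C}$, the indicator of a closed hyperplane $C=\left\{ \zeta:\left\langle \zeta,e\right\rangle =0\right\} $, $\theta=0\in C$, and choose $D$ countable dense with $D\cap C=\emptyset$ (perturb any countable dense set off $C$; the complement of $C$ is open and dense). Then for every $\eta\in\mathscr{H}$ and every $\zeta\in D$ your half-space inequality reads $\infty\geq\left\langle \zeta,\eta\right\rangle $ and holds vacuously, so your intersection formula returns all of $\mathscr{H}$, whereas $\partial\Psi_{C}(0)=N_{C}(0)=\mathrm{span}(e)$. This is not a corner case here: indicator functions $\Psi_{A}$ of convex sets, whose domains have empty interior, are exactly the objects used in Lemma \ref{-The-indicator} and in the constrained likelihood-ratio analysis, so the proposition must cover them.

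The paper closes this step by a different device, which you could adopt: it characterizes $\gamma\in\partial f(\alpha,z)$ by nonnegativity of the \emph{localized} infimum $h(\gamma,z)=\inf_{\left\Vert \beta-\alpha\right\Vert \leq1}\left\{ f(\beta,z)-f(\alpha,z)-\left\langle \beta-\alpha,\gamma\right\rangle \right\} $ (the local subgradient inequality implies the global one by convexity), and then obtains measurability of $z\mapsto h(\gamma,z)$ from Lemma \ref{Hiai-Umegaki2.1.}, whose hypotheses are joint measurability of the integrand, lower semicontinuity in the Hilbert variable, and completeness of the probability space; that lemma rests on a projection-theorem argument, not on replacing the infimum over a ball by an infimum over a countable dense subset --- precisely the replacement that fails for l.s.c.\ integrands. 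Your minimal-norm selection, the $L^{p}$ remark under Assumption B, and the decomposability argument are all fine once measurability of $F$ is secured by such means; as written, however, the existence half of your proof rests on the invalid dense-testing step.
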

\begin{proof}
Let $h\left(\gamma,z\right)$ as 
\begin{alignat*}{1}
h\left(\gamma,z\right) & =\inf_{\left|\beta-\alpha\right|\leq1}\left\{ f\left(\beta,z\right)-f\left(\alpha,z\right)-\left\langle \beta-\alpha,\gamma\right\rangle \right\} .
\end{alignat*}
Fix $\alpha$. $\gamma$ is a subdifferential of $f\left(\cdot,z\right)$
at $\alpha$ iff $h\left(\gamma,z\right)\geq0$. For every $z$, $h\left(\gamma,\cdot\right)$
is measurable. From Lemma \ref{Hiai-Umegaki2.1.} $\gamma\left(\cdot\right)$
is measurable.

Let $\gamma_{1}\left(\cdot\right)$ and $\gamma_{2}\left(\cdot\right)$
be measurable selector of subdifferential $\partial f\left(\alpha,\cdot\right)$
satisfying
\begin{alignat*}{1}
f\left(\beta,\cdot\right) & \geq f\left(\alpha,\cdot\right)-\left\langle \beta-\alpha,\gamma_{1}\left(\cdot\right)\right\rangle ,\\
f\left(\beta,\cdot\right) & \geq f\left(\alpha,\cdot\right)-\left\langle \beta-\alpha,\gamma_{2}\left(\cdot\right)\right\rangle .
\end{alignat*}
From the following inequality
\begin{alignat*}{1}
f\left(\beta,\cdot\right) & \geq f\left(\alpha,\cdot\right)-\left\langle \beta-\alpha,\mathbb{I}_{A}\left(\cdot\right)\gamma_{1}\left(\cdot\right)+\mathbb{I}_{\Omega\setminus A}\left(\cdot\right)\gamma_{2}\left(\cdot\right)\right\rangle ,
\end{alignat*}
$\partial f\left(\alpha,\cdot\right)$ is decomposable. Therefore,
from Lemma \ref{Hiai-Umegaki3.1.} and Lemma \ref{Hiai-Umegaki1.1},
$S_{\partial f}=\partial f$.
\end{proof}

\subsection{A Existence of Minimum}
\begin{prop*}
Existence of Minimum\\
Suppose $f:\Theta\rightarrow(-\infty,\infty]$ is a lower semi-continuous
convex (l.s.c.) functional and its domain $\Theta$ is bounded. Then
there exists $\arg\min_{\theta}f\left(\omega,\theta\right)$ and $\inf_{\theta}f\left(\omega,\theta\right)$.\end{prop*}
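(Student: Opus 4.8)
The plan is to use the \emph{direct method of the calculus of variations}. The proof rests on two structural facts about the real separable Hilbert space $\mathscr{H}$: first, that a bounded set is relatively compact in the weak topology; second, that a convex lower semi-continuous functional is automatically lower semi-continuous with respect to the \emph{weak} topology. Granting these, a minimizing sequence has a weak cluster point, and weak lower semi-continuity forces that cluster point to attain the infimum, which is then seen to be finite.

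First I would upgrade the strong (norm) lower semi-continuity of $f$ to weak lower semi-continuity. The key observation is that the epigraph $\mathrm{epi}(f)=\{(\theta,r)\in\mathscr{H}\times\mathbb{R}:\ f(\theta)\le r\}$ is convex, because $f$ is convex, and strongly closed, because $f$ is l.s.c. A convex strongly closed subset of a Hilbert space is weakly closed, by the Hahn--Banach separation theorem (equivalently, Mazur's lemma). Hence $\mathrm{epi}(f)$ is weakly closed, which is precisely the statement that $f$ is weakly l.s.c.: whenever $\theta_n\xrightarrow{w}\theta$ one has $f(\theta)\le\liminf_n f(\theta_n)$.

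Next I would supply the compactness and run the direct method. Since $\mathscr{H}$ is a Hilbert space it is reflexive (indeed $\mathscr{H}^{*}=\mathscr{H}$), so by the Banach--Alaoglu/Kakutani theorem every bounded closed set is weakly compact; taking $\Theta$ weakly closed as in Assumption C, $\Theta$ is weakly compact, and separability of $\mathscr{H}$ makes the weak topology metrizable on $\Theta$ so that I may argue with sequences. Now set $m=\inf_{\theta\in\Theta}f(\theta)$ and pick a minimizing sequence $\theta_n\in\Theta$ with $f(\theta_n)\to m$. By weak sequential compactness extract a subsequence $\theta_{n_k}\xrightarrow{w}\theta^{\star}\in\Theta$, and by weak lower semi-continuity $f(\theta^{\star})\le\liminf_k f(\theta_{n_k})=m$, whence $f(\theta^{\star})=m$ and the minimum is attained. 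Finiteness of $m$ is then automatic: were $m=-\infty$ we would obtain $f(\theta^{\star})=-\infty$, contradicting $f:\Theta\rightarrow(-\infty,\infty]$, while $m<+\infty$ is just properness of $f$ on $\Theta$.

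I expect the main obstacle to be the passage from strong to weak lower semi-continuity, and it is exactly here that convexity is indispensable. For a general l.s.c. functional the minimizing sequence converges only weakly, and ordinary norm lower semi-continuity gives no control along a merely weakly convergent sequence; in finite dimensions this gap is invisible because bounded sets are norm-compact, but in the infinite-dimensional setting it is the crux. The epigraph/separation argument is what converts convexity into the weak closedness that the direct method demands, and it is the single step that genuinely uses the infinite-dimensional structure.
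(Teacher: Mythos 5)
Your proof is correct, and it shares the paper's first key step: both you and the author convert strong lower semi-continuity plus convexity into \emph{weak} lower semi-continuity via the Hahn--Banach separation theorem (the author phrases it as ``a convex set is norm-closed iff weakly closed,'' you phrase it through weak closedness of the epigraph --- the same mechanism). Where you genuinely diverge is in how compactness is exploited. The paper never touches sequences: it runs a finite-subcover argument twice, first covering $\Theta$ by the weakly open sets $\left\{ \theta:f\left(\theta\right)>a\right\}$ to get a finite lower bound $b=\inf f$, then assuming non-attainment, covering $\Theta$ by $\left\{ \theta:f\left(\theta\right)>b+\frac{1}{n}\right\}$ and extracting a finite subcover to force $\inf f>b$, a contradiction. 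You instead run the direct method: a minimizing sequence, a weakly convergent subsequence, and weak l.s.c.\ at the cluster point. Your route is the more standard one but carries an extra burden the paper's avoids --- you must justify working sequentially in the weak topology (you do, via separability/metrizability on bounded sets, or one could invoke Eberlein--\v{S}mulian), whereas the covering argument needs only topological compactness and so sidesteps metrizability entirely. Conversely, your write-up is more careful on two points the paper glosses over: you note explicitly that $\Theta$ must be weakly \emph{closed} (boundedness alone, as literally stated in the proposition and in Assumption C, does not give weak compactness --- an open ball is bounded but not weakly closed), and you handle finiteness of the infimum ($m>-\infty$ from the codomain, $m<+\infty$ from properness) as a separate explicit step rather than leaving it implicit.
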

\begin{proof}
Let $C$ be a convex subset of a Banach space. From the separation
theorem, $C$ is closed in norm topology if and only if $C$ is closed
in the weak topology(Correspondence of closedness). $f$ is lsc on
$\Theta$ in the norm topology if and only if $f$ is lsc in the weak
topology.

For each $a\in\mathbb{R}$ put 
\begin{align*}
G_{a} & =\left\{ \theta\in\Theta:\ f\left(\theta\right)>a\right\} .
\end{align*}
$G_{a}$ is open in the weak topology and $\Theta=\bigcup_{a\in\mathbb{R}}G_{a}$.
Since $\Theta$ is weakly compact, there is finite subcover such that
\begin{align*}
\Theta & =\bigcup_{i=1}^{n}G_{a_{i}}.
\end{align*}
Putting $a_{0}=\min\left\{ a_{1},\cdots,a_{n}\right\} $, we have
$f\left(\theta\right)>a_{0}$ for all $\theta\in\Theta$. There exists
a real number $b=\inf\left\{ f\left(\theta\right):\ \theta\in\Theta\right\} $.
\\
Suppose $f\left(\theta\right)>b$ for all $\theta\in\Theta$, then
\begin{align*}
\Theta & =\bigcup_{n=1}^{\infty}\left\{ \theta:\ f\left(\theta\right)>b+\frac{1}{n}\right\} .
\end{align*}
Since $\Theta$ is weakly compact,
\begin{align*}
\Theta & =\bigcup_{i=1}^{m}\left\{ \theta:\ f\left(\theta\right)>b+\frac{1}{n_{i}}\right\} .
\end{align*}
Put $b_{0}=\min\left\{ b+\frac{1}{n_{1}},\cdots,b+\frac{1}{n_{m}}\right\} $,
we have $f\left(\theta\right)>b_{0}$ for all $\theta$. Therefore
we have 
\begin{align*}
b & =\inf\left\{ f\left(\theta\right):\ \theta\in\Theta\right\} \geq b_{0}>b.
\end{align*}
This is a contradiction.
\end{proof}

\section*{Acknowledgements}

This research is supposed by grant-in-aid for JSPS Fellows (DC1, 20137989).

\bibliographystyle{ecta}
\bibliography{Mosco_2}

\end{document}